\documentclass[12pt]{amsart}
\setlength{\textwidth}{6.0in} \setlength{\textheight}{9.0in}
\setlength{\oddsidemargin}{0.2in}
\setlength{\evensidemargin}{0.2in}
\setlength{\unitlength}{0.2cm}
\usepackage{amscd}
\usepackage{amssymb,amsmath,amsthm}
\usepackage[all]{xy}
\usepackage{color}

\theoremstyle{plain}
\newtheorem{thm}{Theorem}[section]
\newtheorem{lem}[thm]{Lemma}

\newtheorem{cor}[thm]{Corollary}
\newtheorem{defn-lem}[thm]{Definition-Lemma}

\newtheorem{prop}[thm]{Proposition}
\newtheorem{question}[thm]{Question}

\theoremstyle{definition}
\newtheorem{defn}[thm]{Definition}

\newtheorem{rem}[thm]{Remark}

%\numberwithin{equation}{chapter}

%\newtheorem{theorem}{Theorem}[section]
%\newtheorem{corollary}[theorem]{Corollary}
%\newtheorem{lemma}[theorem]{Lemma}
%\newtheorem{proposition}[theorem]{Proposition}

%\theoremstyle{definition}
%\newtheorem{definition}[theorem]{Definition}
%\newtheorem{remark}[theorem]{Remark}
%\newtheorem{ex}[theorem]{Example}
%-------------------Short Exact Sequence----------------------------

%----------------------------------------------------------
\def\md #1#2#3#4#5 {\left(
                        \begin{matrix}
             #1 & #2 \\
             #3 & #4
                        \end{matrix}
                      \right)- #5}

\def\ma #1#2#3#4 {\left(
                        \begin{matrix}
             #1 & #2 \\
             #3 & #4
                        \end{matrix}
                      \right)}
%--------------------- Multiplier algebra-----------------------------------

\def\Index{\operatorname{Index}}

\def \Ker {\operatorname{Ker}}

\def\QT{\operatorname{QT}}
\def\tsr{\operatorname{tsr}}

\def\Cu{\operatorname{Cu}}
\def\id{\operatorname{id}}
\def\T{\operatorname{T}}
\def\Aut{\operatorname{Aut}}

\newcommand{\mc}{\mathcal}
\begin{document}
\title [Tracially sequentially-split $\sp*$-homomorphisms between $C\sp*$-algebras II]
       { On permanence of regularity properties}

\begin{abstract}
We study a pair of $C\sp*$-algebras $(A, B)$ by associating a $*$-homomorphism from $A$ to $B$ which allows an approximate left inverse to the ultrapower $C\sp*$-algebra of $A$ as a completely positive map of order zero and show that important regularity properties related to the Elliott program pass from $B$ to $A$ in our setting.      
\end{abstract}

\author { Hyun Ho \, Lee and Hiroyuki Osaka}

\address {Department of Mathematics\\
          University of Ulsan\\
         Ulsan, 44610, South Korea 
 }
\email{hadamard@ulsan.ac.kr}

\address{Department of Mathematical Sciences\\
Ritsumeikan University\\
Kusatsu, Shiga 525-8577, Japan}
\email{osaka@se.ritsumei.ac.jp}

\keywords{Tracially sequentially-split by order zero map, Crossed product $C\sp*$-algebras,  Inclusion of $C\sp*$-algebras, Regularity properties}

\subjclass[2010]{Primary:46L55. Secondary:47C15, 46L35}
\date{}
\thanks{This first  author's research was supported by Basic Science Research Program through the National Research Foundation of Korea(NRF) funded by the Ministry of Education(2018R1D1A1B0705092414). The second author's was supported by the JSPS grant
for Scientific Research No. 20K03644.}
\maketitle

%%%%%%%%%%%%%%%%%%%%%%%%%%%%%%%%%%%%%%%%%%%%%%%%%%%%%%%%%%%%%
\section{Introduction}

There have been approaches to find a nice and large subalgebra of the crossed product $C\sp*$-algebra from which we can deduce a structure of the crossed product algebra especially related to a single automorphism case or under a $\mathbb{Z}$-action \cite{ABP, Phillips:Large}. However, a compact group action with  a certain Rokhlin  property recasts the relation between the original algebra and the crossed product $C\sp*$-algebra into a setting that there is a $*$-homomorphism from the crossed product algebra to a larger algebra which turns out to be isomorphic to the stabilization of the original algebra.   More precisely, when a finite(compact) group $G$ acts on a $C\sp*$-algebra $A$ and the automorphism $\alpha:G \curvearrowright A$ has the strict Rokhlin property,  there is a canonical map from $A\rtimes_{\alpha} G \to (C(G)\otimes A) \rtimes_{\sigma\otimes \alpha}G$, where $\sigma:G \curvearrowright C(G)$ is the translation action.  This is an important example to which  the following framework is applied:  It is said that a $*$-homomorphism $\phi:A \to B$ is sequentially split if there is a $*$-homomorphism $\psi:B \to A_{\infty}$,  which is called the approximate left inverse, such that $\psi(\phi(a))=a$ for all $a\in A$ \cite{BS}.  We note that this framework also works  for inclusions with the strict Rokhlin property though it has grown out of Toms and Winter's strongly self-absorbing $C\sp*$-algebras.\\  

 Since tracial Rokhlin properties for a finite group action have been developed in \cite{HO}, \cite{LO}, \cite{OT1}, and \cite{Phillips:tracial},  it is worthwhile to think of a tracial version of a sequentially-split $*$-homomorphism and we already suggested one in the name of tracially sequentially split $*$-homomorphism by replacing the rigid condition $\psi(\phi(a))=a$ with the condition $\psi(\phi(a))-a$ to be tracially small in a way reminiscent of H. Lin's tracial topological rank in \cite{LO1}.  In this article we further relax the condition for the approximate left inverse $\psi$ to be a $*$-homomorphism and extend our previous definition, i.e., we say that a $*$-homomorphism $\phi:A \to B$ is tracially sequentially-split by order zero map when the approximate left inverse $\psi$ is allowed to be an order zero map and $\psi(\phi(a))-a$ is tracially small for all $a\in A$. We note that this weakened definition requires more careful arguments based on the transition from projections to positive elements to prove the statements even appeared in \cite{BS} and believe that our work will shed light on systematic analysis of tracial approximation.  Moreover, while a tracially sequentially split $*$-homomorphism in \cite{LO1} provides the unified framework for Phillips' tracial Rokhlin property of a finite group action and inclusions with a tracial Rokhlin property due to Osaka and Teruya \cite{OT1}, our extended notion accommodates the generalized tracial Rokhlin property of a finite group action in \cite{HO} and inclusions with the generalized tracial Rokhlin property in \cite{LO}. This article is organized as follows: Sec. \ref{S:Pre} serves as a preliminary  in which we recall the definition of  a completely positive map of order zero and  Cuntz sub-equivalence for positive elements.  In addition, we  present related facts to be used throughout the article. In Sec. \ref{S:DEF},  we precisely describe when a $*$-homomorphism $\phi:A \to B$ is called  tracially sequentially-split by order zero map and  show, among other things,  that  $\mc{Z}$-stability and strict comparison passes from $B$ to $A$ if there exists such a map.   In Sec. \ref{S:Examples},  we prove that the generalized tracial Rokhlin property of a finite group action gives rise to a $*$-homomorphism that is tracially sequentially-split by order zero map and the same conclusion for the inclusions with the generalized tracial Rokhlin property and thereby unify the known permanence results which were obtained separately in the scattered literature.

%%%%%%%%%%%%%%%%%%%%%%%%%%%%%%%%%%%%%%%%%%%%%%%%%%%%%%%%%%%%%%%%%%%%%%%%%%%

\section{Preliminaries: Cuntz subequivalence and Order zero map}\label{S:Pre}

Let $A$ be a $C\sp*$-algebra. We write $A^{+}$ for the set of positive elements in $A$.  We denote by $M_{\infty}(A)$  the algebraic direct limit of the inductive system $(M_n(A), \phi_n)$ where 
 $\displaystyle \phi_n: M_n(A) \ni a \to \left( \begin{matrix} a &0 \\
 0 &0 \end{matrix} \right) \in M_{n+1}(A) $ and  $a \oplus b=  \left(\begin{matrix}
 a & 0 \\ 0 &b \end{matrix}\right) \in M_{n+m}(A)$ for $a\in M_n(A), b\in M_m(A)$. 
 
\begin{defn}(Cuntz)
For $a, b \in A^{+}$ we say that $a$ is \emph{Cuntz subequivalent} to $b$, written $a \lesssim b$ if there exists a sequence $(v_n)_{n=1}^{\infty}$ in $A$ such that $\displaystyle \lim_{n\to \infty}v_nbv^*_n=a$, and $a$ and $b$ are \emph{Cuntz equivalent} in $A$, written $a \sim b$ if $a \lesssim b$ and $b\lesssim a$. More generally, if $a$ is a positive element in $M_n(A)$ and if $b$ is a positive element in $M_m(A)$, then write $a \lesssim b$ if there exists a sequence of rectangular  matrices $\{ x_k\}_k$ in $M_{m,n}(A)$ with $x_k^*bx_k \to a$ using obvious matrix multiplication.
\end{defn}

  When we consider $\mathbb{K}\otimes A$ instead of $A$, then the same equivalence relation defines a semigroup $\Cu(A)=(\mathbb{K}\otimes A)^{+}/ \sim$ together with the commutative semigroup operation $\langle a \rangle + \langle b \rangle = \langle a \oplus b \rangle$ and the partial order $\langle a \rangle \le \langle b \rangle \iff a \lesssim b$. We also define the semigroup $W(A)= (M_{\infty}(A))^{+}/\sim$ with the same operation and order.\\ 

For $\epsilon >0$ let  $f_{\epsilon}$ be a function defined from $[0,\infty)$ to $[0,\infty)$ given by $\max\{t-\epsilon, 0 \}$. Then $(a-\epsilon)_{+}$ is defined as $f_{\epsilon}(a)$.\\

Note that $\lesssim$ is transitive and reflexive, and $\sim$ is an equivalence relation. We collect some facts in the following. 
\begin{lem}\label{L:Cuntz}
Let $A$ be a $C\sp*$-algebra. 
\begin{enumerate}
\item Let $a,b \in A^{+}$. Suppose $a \in \overline{bAb}$. Then $a \lesssim b$.
\item Let $a,b \in A^{+}$ be orthogonal (that is, $ab=0$ written $a \perp b$). Then $a+b \sim a \oplus b$.
\item Let $c \in A$. Then $cc^* \sim c^*c$.
\item Let $a, b \in A^{+}$. Then the following are equivalent. 
\begin{enumerate}
\item $a\lesssim b$, 
\item $(a-\epsilon)_{+} \lesssim b$ for all $\epsilon>0$,
\item for every $\epsilon > 0$ there is $\delta>0$ such that $(a-\epsilon)_{+} \lesssim (b-\delta)_{+}$, 
\item for every $\epsilon > 0$ there exist $\delta>0$ and $x\in A$ such that $(a-\epsilon)_{+}=x^*(b-\delta)_{+}x$. 
\end{enumerate}
\item Let $a_j \lesssim b_j$ for $j=1,2$. Then $a_1 \oplus a_2 \lesssim b_1\oplus b_2$. If also $b_1 \perp b_2$, then $a_1+a_2 \lesssim b_1+b_2$ (note that we do not require $a_1 \perp a_2$.)
\end{enumerate}
\end{lem}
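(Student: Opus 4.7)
This lemma collects standard facts on Cuntz subequivalence; my plan is to invoke continuous functional calculus together with the $\epsilon$--$\delta$ cut-off techniques of R\o rdam, spelling out short self-contained arguments where possible and sketching the rest.

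For (1), the hypothesis $a\in\overline{bAb}$ puts $a$ in the hereditary subalgebra generated by $b$; given $\epsilon>0$, R\o rdam's lemma applied to any $x\in A$ with $\|bxb-a\|<\epsilon$ produces $r\in A$ with $(a-\epsilon)_+ = r^*(bxb)r$, and bounding $bxb\le\|x\|b^2$ together with the elementary equivalence $b^2\sim b$ gives $(a-\epsilon)_+\lesssim b$; letting $\epsilon\to 0$ completes the argument. For (3), setting $v_n = c(c^*c+1/n)^{-1/2}$ and using the intertwining identity $c\,g(c^*c)=g(cc^*)c$ for $g(t)=t/(t+1/n)$ yields $v_n(c^*c)v_n^*\to cc^*$, and the symmetric computation gives $c^*c\lesssim cc^*$. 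For (2), the direction $a+b\lesssim a\oplus b$ follows from an identity $w(a\oplus b)w^* = (a+b)\oplus 0$ with $w$ a suitable $2\times 2$ matrix over the unitization; the reverse uses the column with entries $a^{1/2}(a+b+1/n)^{-1/2}$ and $b^{1/2}(a+b+1/n)^{-1/2}$, where $ab=0$ (hence $a^{1/2}b^{1/2}=0$) kills the off-diagonal terms and functional calculus sends the diagonal entries to $a$ and $b$ respectively.

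Part (4) is the technical heart. The implication (a)$\Rightarrow$(b) is immediate since $(a-\epsilon)_+\in\overline{aAa}$ and one applies (1). For (b)$\Rightarrow$(c), the key input is again R\o rdam's lemma: if $\|vbv^* - (a-\epsilon/2)_+\|$ is small then a further cut produces $(a-\epsilon)_+ = r^*(b-\delta)_+r$ for appropriate $\delta>0$ and $r\in A$. For (c)$\Rightarrow$(d) one uses that $(a-\epsilon)_+\in\overline{(a-\epsilon/2)_+A(a-\epsilon/2)_+}$ to extract an explicit $x\in A$, and (d)$\Rightarrow$(a) is trivial. Part (5) then follows by block-diagonal witnesses: given $a_j=\lim v_{j,n}b_jv_{j,n}^*$, the direct sums $v_{1,n}\oplus v_{2,n}$ implement $a_1\oplus a_2\lesssim b_1\oplus b_2$; the orthogonal version combines this with (2) via $a_1+a_2\lesssim a_1\oplus a_2\lesssim b_1\oplus b_2\sim b_1+b_2$, where the final equivalence uses $b_1\perp b_2$ while the first inequality does not require orthogonality.

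The main obstacle is nothing deep, only the $\epsilon$--$\delta$ bookkeeping in (4); the cleanest presentation is to cite R\o rdam's survey on the Cuntz semigroup for (1), (3), (4) and the direct-sum part of (5), and to treat (2) together with the orthogonal part of (5) explicitly since these are the items where the orthogonality hypothesis does substantive work.
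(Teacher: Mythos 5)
Your proposal is correct in substance, but it takes a different route from the paper: the paper does not prove these facts at all, it simply cites Section~2 of Kirchberg--R{\o}rdam for items (1)--(4) and R{\o}rdam's second UHF paper (Proposition~2.5 there) for item (5), whereas you supply the standard arguments. Your sketches are the right ones --- the $\epsilon$--$\delta$ cut-off lemma for (1) and (4), the intertwining $cg(c^*c)=g(cc^*)c$ for (3), the row/column trick $a+b=cc^*$, $c^*c=a\oplus b$ (using $a^{1/2}b^{1/2}=0$) for (2), and block-diagonal witnesses plus (2) for (5) --- and your closing observation that $a_1+a_2\lesssim a_1\oplus a_2$ needs no orthogonality while $b_1\oplus b_2\sim b_1+b_2$ does is exactly the point of the parenthetical remark in the statement. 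One small repair is needed in (1): the element $bxb$ approximating $a$ need not be positive, so R{\o}rdam's lemma does not apply to it directly; instead approximate $a^{1/2}$ by an element $by\in bA$ (possible since $a^{1/2}\in\overline{bAb}$) and work with the positive element $byy^*b$, or equivalently use that $b^{1/n}$ is an approximate unit for $\overline{bAb}$ and cut $a$ down to $b^{1/n}ab^{1/n}$. With that adjustment everything goes through, and your suggestion to cite the standard references for (1), (3), (4) while writing out (2) and the orthogonal half of (5) is essentially what the paper does, only more explicitly.
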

\begin{proof}
Most of them can be found in Section 2 of \cite{KR} and the last one is proved in \cite[Proposition 2.5]{Ro:UHF2} and \cite[Proposition  1.1]{Cu}.
\end{proof}
\begin{lem}\cite[Lemma 2.4]{Phillips:Large}\label{L:orthogonal}
Let $A$ be a simple $C\sp*$-algebra which is not type I. Let $a \in A^{+} \setminus \{0\}$, and let $n$ be any positive integer. Then there exist nonzero positive elements $b_1,b_2, \dots, b_n \in A$ such that 
$b_1 \sim b_2 \cdots \sim b_n$  and $b_i \perp b_j $ for $i\ne j$, and such that $b_1+b_2+\cdots+b_n \in \overline{aAa}$.
\end{lem}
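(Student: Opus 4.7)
The plan is to proceed by induction on $n$. The base case $n = 1$ is immediate: take $b_1 = a$. For the inductive step, given $b_1,\ldots,b_n \in \overline{aAa}$ pairwise orthogonal, mutually Cuntz equivalent, and nonzero, I would split $b_1$ into two smaller equivalent pieces while simultaneously shrinking each remaining $b_i$ to a Cuntz-equivalent copy inside its own hereditary subalgebra.

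The key input is the following classical fact, a consequence of Glimm's theorem: in any simple non-type-I $C^*$-algebra $B$ and any nonzero $c \in B^+$, the hereditary subalgebra $\overline{cBc}$ contains a nonzero element $v$ with $v^*v \perp vv^*$ (equivalently, a nonzero square-zero element). Applied to $\overline{b_1 A b_1}$ — which inherits simplicity and non-type-I-ness from $A$ via Morita invariance of type I — this yields $v$ with $e := v^*v$ and $f := vv^*$ nonzero, orthogonal, and Cuntz equivalent by Lemma \ref{L:Cuntz}(3). For $\eta > 0$ smaller than $\min(\|e\|,\|f\|)$, the truncations $(e-\eta)_+$ and $(f-\eta)_+$ remain nonzero and orthogonal, and are Cuntz equivalent: setting $w := v\, g(v^*v)$ with $g(t) := \sqrt{\max(1-\eta/t,\,0)}$, one verifies $w^*w = (e-\eta)_+$ and $ww^* = (f-\eta)_+$, so Lemma \ref{L:Cuntz}(3) applies.

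Next I would transport the Cuntz class of $(e-\eta)_+$ into each $\overline{b_iAb_i}$ for $i \geq 2$: since $e \in \overline{b_1Ab_1}$ gives $e \lesssim b_1 \sim b_i$, Lemma \ref{L:Cuntz}(4)(d) yields $\delta>0$ and $x_i \in A$ with $(e-\eta)_+ = x_i^*(b_i-\delta)_+ x_i$, and then $e_i := (b_i-\delta)_+^{1/2} x_i x_i^* (b_i-\delta)_+^{1/2} \in \overline{b_iAb_i}$ is Cuntz equivalent to $(e-\eta)_+$ by Lemma \ref{L:Cuntz}(3). The collection $\{(e-\eta)_+,\, e_2,\ldots,e_n,\, (f-\eta)_+\}$ is pairwise orthogonal (the hereditary subalgebras $\overline{b_iAb_i}$ are mutually orthogonal since $b_i \perp b_j$, and $e \perp f$ inside $\overline{b_1Ab_1}$), mutually Cuntz equivalent to $(e-\eta)_+$, nonzero, and contained in $\overline{aAa}$; this provides the required $(n+1)$-tuple.

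The main obstacle is the auxiliary fact producing $v$ with $v^*v \perp vv^*$ in a simple non-type-I $C^*$-algebra; this is where the non-type-I hypothesis is genuinely used, reflecting the structural content distinguishing such algebras from $\mathcal{K}(H)$.
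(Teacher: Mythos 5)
Your argument is correct, and it is worth noting that the paper itself offers no proof of this lemma --- it simply cites \cite[Lemma 2.4]{Phillips:Large} --- so your write-up is genuinely self-contained where the paper is not. Your route also differs from the one in the cited source: Phillips produces all $n$ pieces in one stroke from a nonzero order zero map $M_n\to\overline{aAa}$, obtained from Glimm's theorem (a subalgebra of $\overline{aAa}$ with a matrix-algebra quotient) together with projectivity of the cone $C_0((0,1])\otimes M_n$; the images of $\id_{(0,1]}\otimes e_{jj}$ are then automatically orthogonal and equivalent. You instead induct, splitting one summand via a square-zero element and then using the R{\o}rdam-type comparison Lemma \ref{L:Cuntz}(4)(d) to plant a Cuntz-equivalent copy of the small piece inside each remaining hereditary subalgebra $\overline{b_iAb_i}$. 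The trade-off is clear: your approach needs only the weaker structural fact that a noncommutative $C\sp*$-algebra contains a nonzero element $v$ with $v^2=0$ (applied to $\overline{b_1Ab_1}$, which is simple and, being simple and non-elementary, noncommutative --- you do not really need the full strength of Glimm here), but it pays for this with the truncation bookkeeping $(e-\eta)_+$ needed to make the transported copies exactly equivalent; the Glimm/projectivity route needs heavier input but no comparison lemmas at all. All the verifications in your inductive step check out: $w=vg(v^*v)$ does satisfy $w^*w=(e-\eta)_+$ and $ww^*=(f-\eta)_+$ because $vh(v^*v)=h(vv^*)v$ for continuous $h$ vanishing at $0$, the elements $e_i$ land in $\overline{b_iAb_i}\subseteq\overline{aAa}$, and orthogonality of the hereditary subalgebras gives orthogonality of the new family. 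The only ingredient you do not prove is the existence of the square-zero element, which you flag explicitly; that fact is standard (a $C\sp*$-algebra with no nonzero square-zero elements is commutative), so I regard the proof as complete.
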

\begin{defn}\cite[Definition 1.3]{WZ}
Let $A$ and $B$ be $C\sp*$-algebras and let $\phi:A \to B$ be a completely positive map. It is said that $\phi$ has order zero  if for $a,b \in A^{+}$  $\phi(a)\phi(b)=\phi(b)\phi(a)=0$ whenever $ab=ba=0$.
\end{defn}
 We shall abbreviate a completely positive map of order zero as an order zero map. The following theorem is about the structure of an order zero map. 
\begin{thm}\cite[Theorem 3.3]{WZ}\label{T:orderzero}
Let $A$ and $B$ be $C\sp*$-algebras and $\phi:A \to B$ be a contractive order zero map. Let $C=C^*(\phi(A)) \subset B$. Then there is a positive element $\displaystyle h \in \mc{M}(C) \cap C^{\prime}$ with $\|h\|=\|\phi\|$ and a $*$-homomorphism 
 \[\pi_{\phi}:A \to \mc{M}(C)\cap \{h\}'\] such that for $a\in A$
\[ \phi(a)=h\pi_{\phi}(a).\]
If $A$ is unital, then $h=\phi(1_A) \in C$. 
\end{thm}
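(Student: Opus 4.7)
The plan is to treat the unital case first with $h := \phi(1_A)$, and then pass to the non-unital case via multiplier algebras. The central algebraic input is the identity
\[ \phi(a)\,\phi(b) \;=\; h\,\phi(ab) \qquad (a,b \in A), \]
valid in the unital case, where $h = \phi(1_A)$. This identity is a polarized strengthening of the order zero hypothesis; one clean route is the Stinespring reduction: take a minimal Stinespring triple $(\pi, V, \mc{H})$ for $\phi$ viewed as a cp contraction, so that $\phi(a) = V^*\pi(a) V$ and $h = V^*V$. Order zero forces $VV^* \in \pi(A)'$, and then
\[ \phi(a)\phi(b) \;=\; V^*\pi(a)(VV^*)\pi(b)V \;=\; (V^*V)(V^*\pi(ab) V) \;=\; h\,\phi(ab). \]

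Once this identity is in hand, the remaining steps are formal. Setting $b = 1_A$ yields $\phi(a)\,h = h\,\phi(a)$, so $h \in C \cap C'$; moreover $\|h\| = \|\phi(1_A)\| = \|\phi\|$ is automatic. To construct $\pi_{\phi}$, I would work in $C^{**}$: let $p$ be the support projection of $h$, so $h$ is invertible on $p C^{**} p$. Define $\pi_{\phi}(a) := h^{-1}\phi(a) \in pC^{**}p$, or equivalently $\pi_{\phi}(a) := \lim_n g_n(h)\,\phi(a)$ where $g_n \in C_0(0,\infty)$ approximate $t \mapsto 1/t$ pointwise; this limit exists in norm because $\phi(a)$ lies in the hereditary subalgebra generated by $h$. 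Multiplicativity follows by direct computation: $\pi_{\phi}(a)\pi_{\phi}(b) = h^{-2}\phi(a)\phi(b) = h^{-2}\cdot h\,\phi(ab) = h^{-1}\phi(ab) = \pi_{\phi}(ab)$. A separate check using the limit description confirms $\pi_{\phi}(a) \in C \cap \{h\}'$ rather than merely in the bidual.

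For the non-unital case, I would adjoin a unit and extend $\phi$ to a strictly continuous order zero cp map $\tilde{\phi}: \tilde{A} \to \mc{M}(C)$ by setting $\tilde{\phi}(1_{\tilde{A}}) := h$, where $h$ is the strict-topology limit of $\phi(e_\lambda)$ along an approximate unit $(e_\lambda)$ of $A$; then the unital argument applied inside $\mc{M}(C)$ produces $\pi_{\phi}$. The main obstacle throughout is establishing the identity $\phi(a)\phi(b) = h\phi(ab)$: order zero is only a pointwise orthogonality condition on $A^+$, and promoting it to a bilinear algebraic identity on all of $A$ requires either the Stinespring reduction above or a careful spectral-cutoff argument. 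Once that identity is secured, the rest of the proof reduces to routine functional calculus.
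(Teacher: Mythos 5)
This theorem is quoted from \cite[Theorem 2.3]{WZ}; the paper offers no proof of its own, so your proposal has to be measured against the argument in that source. Your overall architecture --- establish the bilinear identity $\phi(a)\phi(b)=h\,\phi(ab)$ in the unital case, define $\pi_{\phi}$ by ``dividing by $h$'' via functional calculus, and treat the non-unital case by a strict extension to the multiplier algebra --- is exactly the architecture of the Winter--Zacharias proof. The difference is where the identity comes from: they deduce it from Wolff's theorem on disjointness-preserving self-adjoint maps, whereas you propose to get it from the claim that, for a minimal Stinespring dilation $\phi(\cdot)=V^*\pi(\cdot)V$, order zero \emph{forces} $VV^*\in\pi(A)'$. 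That claim is true, but as written it is not a reduction: since $\mathcal{H}=\overline{\pi(A)V\mathcal{K}}$, the relation $[VV^*,\pi(c)]=0$ is \emph{equivalent} to the family of identities $\phi(ac)\phi(b)=\phi(a)\phi(cb)$ for all $a,b,c$, which (take $a=1$) contains precisely the identity you are trying to prove. So the single step you dismiss as ``forced'' carries the entire content of the theorem, and you supply no argument for it --- order zero only gives $V^*\pi(a)VV^*\pi(b)V=0$ for orthogonal positive $a,b$, and passing from that to commutation of $VV^*$ with $\pi(A)$ is the hard part. This is the genuine gap.

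Two further assertions are false as stated. First, $\lim_n g_n(h)\phi(a)$ need not exist in norm: take $A=\mathbb{C}$, $B=C_0((0,1])$, $\phi(\lambda)=\lambda\cdot\mathrm{id}_{(0,1]}$; then $h=\mathrm{id}_{(0,1]}$ and $g_n(h)\phi(1)=g_n(h)h$ converges only strictly (to $1_{\mathcal{M}(C)}$), not in norm, because $0$ is a non-isolated point of $\sigma(h)$. Second, and for the same reason, $\pi_{\phi}$ does not map into $C$: $\pi_{\phi}(1_A)$ is the support projection of $h$, which lies in $\mathcal{M}(C)$ --- this is what the symbol $\mc{C}$ in the statement denotes --- but not in $C$ unless $0$ is isolated in $\sigma(h)$. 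Both points are repaired by replacing norm convergence with strict (or strong, in $C^{**}$) convergence and $C$ with $\mathcal{M}(C)$, which is how \cite{WZ} proceed; granting the key identity, the remaining computations (commutation of $h$ with $C$, $\|h\|=\|\phi\|$, multiplicativity of $\pi_{\phi}$, the unitization step) are indeed routine, as you say.
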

The following theorem is important since it enables us to lift an order zero map in some cases.  
\begin{thm} \cite[Corollary 4.1]{WZ}\label{T:semiprojective}
Let $A$ and $B$ be $C\sp*$-algebras, and $\phi:A \to B$ a contractive order zero map. Then the map given by $\varrho_{\phi} (\id_{(0,1]}\otimes a):=\phi(a)$ induces a $*$-homomorphism $\varrho_{\phi}:C_0((0,1])\otimes A \to B$. Conversely, any $*$-homomorphism $\varrho:C_0((0,1])\otimes A \to B$ induces a c.p.c. order zero map 
$\phi_{\varrho}(a):=\varrho(\id_{(0,1]}\otimes a)$.\\
These mutual  assignments yield a canonical correspondence between the space of c.p.c. order zero maps from $A$ to $B$  and the space of $*$-homomorphisms from $C_0((0,1])\otimes A $ to $B$.
\end{thm}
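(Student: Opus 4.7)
The plan is to read off $\varrho_\phi$ from the structure theorem for order zero maps (Theorem~\ref{T:orderzero}), and conversely to observe that restricting a $*$-homomorphism out of a cone tautologically yields an order zero map. The mutual-inverse statement then follows from the fact that $C_0((0,1])$ is singly generated as a $C\sp*$-algebra.

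For the forward direction, I would begin by applying Theorem~\ref{T:orderzero} to the given c.p.c.\ order zero map $\phi:A\to B$, producing a positive contraction $h$ and a $*$-homomorphism $\pi_\phi:A \to C^*(\phi(A))\cap\{h\}'$ such that $h$ commutes with $\pi_\phi(A)$ and $\phi(a) = h\pi_\phi(a)$. Because $h$ commutes with $\pi_\phi(A)$, the continuous functional calculus of $h$ gives a $*$-homomorphism $C_0((0,1]) \to C^*(\phi(A))\cap \pi_\phi(A)'$, $f \mapsto f(h)$, whose image commutes with the image of $\pi_\phi$. These two commuting $*$-homomorphisms combine (via the universal property of the tensor product; nuclearity of $C_0((0,1])$ makes min and max coincide) into a single $*$-homomorphism $\varrho_\phi: C_0((0,1]) \otimes A \to B$ determined by $\varrho_\phi(f \otimes a) = f(h)\pi_\phi(a)$. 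Specialising to $f = \id_{(0,1]}$ yields $\varrho_\phi(\id_{(0,1]} \otimes a) = h\pi_\phi(a) = \phi(a)$, as required.

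For the converse, given a $*$-homomorphism $\varrho:C_0((0,1]) \otimes A \to B$, the map $\phi_\varrho(a) := \varrho(\id_{(0,1]}\otimes a)$ is completely positive and contractive as the restriction of a $*$-homomorphism to the self-adjoint subspace $\id_{(0,1]}\otimes A$ (using $\|\id_{(0,1]} \otimes a\| = \|a\|$). It is order zero because $ab=0$ forces $(\id_{(0,1]}\otimes a)(\id_{(0,1]}\otimes b) = \id_{(0,1]}^2 \otimes ab = 0$, so $\phi_\varrho(a)\phi_\varrho(b)=\varrho(0)=0$. That the two constructions are mutually inverse is then routine: $\phi_{\varrho_\phi}=\phi$ is immediate from the defining formula, while $\varrho_{\phi_\varrho}=\varrho$ follows from the fact that $\id_{(0,1]}$ generates $C_0((0,1])$ as a $C\sp*$-algebra (since $C_0((0,1])$ is universal for a positive contraction), so $\{\id_{(0,1]}\otimes a : a\in A\}$ generates $C_0((0,1]) \otimes A$, and two $*$-homomorphisms that agree on a generating set coincide.

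I expect the real content to sit in the forward direction, specifically in extending the formula $f\otimes a \mapsto f(h)\pi_\phi(a)$ from elementary tensors to a genuine $*$-homomorphism on the tensor product. This step rests on two ingredients of independent weight: Theorem~\ref{T:orderzero} is needed to replace the mere c.p.c.\ order zero datum $\phi$ by a \emph{commuting} pair consisting of a positive element $h$ and a \emph{bona fide} $*$-homomorphism $\pi_\phi$; and the nuclearity of $C_0((0,1])$ is what lets commuting $*$-homomorphisms be assembled into a single $*$-homomorphism on the spatial tensor product. Once these are in hand, the remaining verifications are formal.
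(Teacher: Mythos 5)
Your argument is correct and follows essentially the same route as the cited source: the paper itself gives no proof of Theorem~\ref{T:semiprojective} (it is quoted from \cite{WZ}), and the Winter--Zacharias proof is exactly your construction, namely passing through the structure theorem to get the commuting pair $(h,\pi_\phi)$, assembling $f\otimes a\mapsto f(h)\pi_\phi(a)$ via nuclearity of $C_0((0,1])$, and using that $\id_{(0,1]}\otimes A$ generates the cone to see the assignments are mutually inverse. No gaps worth flagging.
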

We finish this section with a simple observation. 
\begin{lem}\label{L:simpleorderzero}
Let $A$ be a simple $C\sp*$-algebra and $\phi:A \to B$ be a nonzero contractive order zero map. Then $\phi$ is injective. 
\end{lem}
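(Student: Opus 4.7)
The plan is to apply the Wolff--Winter--Zacharias structure theorem (Theorem~\ref{T:orderzero}) and then show that $\ker\phi$ is a closed two-sided $*$-ideal of $A$; simplicity of $A$ will finish the job (with the tacit assumption $\phi\neq 0$, without which the statement is false).

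First, I would invoke Theorem~\ref{T:orderzero} to write
\[
\phi(a) = h\,\pi_{\phi}(a), \quad a\in A,
\]
where $h\in\mathcal{C}\cap C'$ is a positive element and $\pi_{\phi}:A\to \mathcal{C}\cap\{h\}'$ is a $*$-homomorphism. The two key facts I will use repeatedly are (i) that $\pi_{\phi}$ is an honest $*$-homomorphism, hence multiplicative and $*$-preserving, and (ii) that $h$ commutes with every element of $\pi_{\phi}(A)$.

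Next, I would establish that the set $I:=\ker\phi=\{a\in A:\phi(a)=0\}$ is a closed two-sided $*$-ideal of $A$. Closedness follows from continuity of $\phi$, $*$-closedness from the fact that $\phi(a^*)=\phi(a)^*$ for completely positive maps, and linearity from linearity of $\phi$. For the ideal property, suppose $\phi(a)=0$, i.e.\ $h\,\pi_{\phi}(a)=0$; since $h$ commutes with $\pi_{\phi}(b)$ for every $b\in A$, the computations
\[
\phi(ab)=h\,\pi_{\phi}(a)\pi_{\phi}(b)=\pi_{\phi}(b)\bigl(h\,\pi_{\phi}(a)\bigr)=0
\]
and
\[
\phi(ba)=h\,\pi_{\phi}(b)\pi_{\phi}(a)=\pi_{\phi}(b)\bigl(h\,\pi_{\phi}(a)\bigr)=0
\]
show that $I$ is closed under both left and right multiplication by $A$.

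Finally, simplicity of $A$ forces $I=0$ or $I=A$. The latter would mean $\phi\equiv 0$, contrary to the standing assumption that $\phi$ is a (nonzero) order zero map, so $I=0$ and $\phi$ is injective. The only slightly delicate point in this plan is the commutation step used in the ideal verification; all of it, however, is a direct consequence of Theorem~\ref{T:orderzero}, so I do not anticipate any serious obstacle beyond keeping track of where the positive element $h$ sits relative to $\pi_{\phi}(A)$.
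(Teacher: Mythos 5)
Your proof is correct and follows essentially the same route as the paper: decompose $\phi=h\,\pi_{\phi}$ via Theorem~\ref{T:orderzero}, check that $\ker\phi$ is a closed two-sided ideal using that $h$ commutes with $\pi_{\phi}(A)$, and invoke simplicity. You are right to make explicit the tacit assumption $\phi\neq 0$ (which the paper leaves implicit); the only cosmetic quibble is that in your first displayed computation the middle equality holds simply because $h\,\pi_{\phi}(a)=0$ annihilates the product on the left, not by any general commutation of $\pi_{\phi}(a)$ with $\pi_{\phi}(b)$ -- the commutation of $h$ with $\pi_{\phi}(A)$ is genuinely needed only in the second computation, exactly where you (and the paper) use it.
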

\begin{proof}
Write $\phi(\cdot)=h\pi_{\phi}(\cdot)$ as in Theorem \ref{T:orderzero}. Consider $a\in A$ and $b\in \Ker \phi$. Then 
\[ \begin{split}
\phi(ab)&=h\pi_{\phi}(ab)\\
&=h\pi_{\phi}(a)\pi_{\phi}(b)\\
&=\pi_{\phi}(a)h\pi_{\phi}(b)\\
&=\pi_{\phi}(a)\phi(b)=0.
\end{split}
\]
So $ab \in \Ker \phi$. Similarly $ba \in \Ker \phi$. Thus $\Ker \phi$ is a closed ideal and it must be trivial since $A$ is simple.  
\end{proof}

%%%%%%%%%%%%%%%%%%%%%%%%%%%%%%%%%%%%%%%%%%%%%%%%%%%%%%%
\section{Tracially sequentially-split homomorphism between $C\sp*$-algebras and $\mc{Z}$-stability}\label{S:DEF}

Throughout the paper, we fix a free ultrafilter $\omega$ on $\mathbb{N}$ and recall that  
$l^{\infty}(\mathbb{N}, A)$ denotes  the $C\sp*$-algebra of  all bounded function from $\mathbb{N}$  to  $A$. We define a closed ideal of $l^{\infty}(\mathbb{N}, A)$ as follows: 
\[c_{\omega}(\mathbb{N}, A)=\{(a_n)_n \in l^{\infty}(\mathbb{N}, A)  \mid \lim_{\omega}\|a_n \|=0 \}. \]

Then we denote by $A_{\omega}=l^{\infty}(\mathbb{N}, A)/c_{\omega}(\mathbb{N}, A)$ the ultrapower $C\sp*$-algebra of $A$  with respect to the filter $\omega$ that is  equipped with the norm  $\|a\|= \lim_{\omega} \|a_n\| $ for $a=[(a_n)_n] \in A_{\omega}$. In addition, we denote by $\pi_{\omega}$ the canonical quotient map from $l^{\infty}(\mathbb{N}, A)$  onto $A_{\omega}$. Note that we can embed $A$ into $A_{\omega}$ as constant sequences, and we call $A_{\omega} \cap A'$ the central ultrapower algebra of $A$. For an automorphism of $\alpha$ on $A$, we also denote by $\alpha_{\omega}$ the induced automorphism on $A_{\omega}$ or $A_{\omega}\cap A'$ without confusion. 

Since $ \|a\|=\lim_{\omega}\|a_n\|$ for $a=[(a_n)_n] \in A_{\omega}$, for any nonzero projection $p=[(p_n)] \in A_{\omega}$ we may assume that each $p_n$ is a nonzero projection in $A$. Similarly, for any nonzero positive element $a \in A_{\omega}$ we can represent it as $a=[(a_n)_n]$ such that each $a_n$ is a nonzero positive element in $A$ and  $\{ \|a_n\| \mid n=1,2, \dots \}$ is uniformly away from zero. This is the main reason that we introduce $A_{\omega}$ rather than $A_{\infty}$.  When we consider $A_{\omega}$, we assume that $A$ itself is separable and unital. 

\begin{defn}\label{D:sequentiallysplitbyorderzero}
 A $^*$-homomorphism $\phi:A \to B$ is called tracially sequentially-split by order zero map, if for every positive nonzero element $z \in A_{\omega}$  there exist an order zero map $\psi: B \to A_{\omega}$ and a nonzero contractive positive element $g\in A_{\omega}\cap A'$ such that 
\begin{enumerate}
\item $\psi(\phi(a))=ag$ for all $a\in A$, 
\item $1_{A_{\omega}} -g \lesssim z$ in $A_{\omega}$.
\end{enumerate}
\end{defn}

 Although the diagram below is not commutative, we still use it to symbolize that $\phi$ is tracially sequentially-split by order zero map with its tracial approximate left inverse $\psi$;
\begin{equation*}\label{D:diagram}
\xymatrix{ A \ar[rd]_{\phi} \ar@{-->}[rr]^{\iota} && A_{\omega} &\\
                          & B \ar[ur]_{\psi: \,\text{order zero.}} }
\end{equation*}
\begin{rem}
\begin{enumerate}
\item When both $A$ and $B$ are unital, and $\phi$ is unit preserving, then $g=\psi(1_B)$. Moreover, if $g=1_{A_{\omega}}$, then $\phi$ is called a (strictly) sequentially split $*$-homomorphism by \cite{BS} since an order zero map $\psi$ is in this case a $*$-homomorphism. 
\item If $\phi:A\to B$ is a unital $*$-homomorphism and $g$ is a projection, then an order zero map $\psi$ corresponds to  a $*$-homomorphism. In this case, $\phi$ is called a tracially sequentially-split map with the consequence that $1-g$ is Murray-von Neumann equivalent to a projection in $\overline{zA_{\omega}z}$ \cite{LO1}. 
\end{enumerate}
\end{rem}
\begin{prop}\label{P:amplification}
 Let $A$ be a unital inifinite dimensional simple  $C\sp*$-algebra and $B$ be a unital $C\sp*$-algebra. Suppose that $\phi:A \to B$ is tracially sequentially-split by order zero map and unit preserving. Then its amplification $\phi\otimes \id_n: A\otimes M_n \to B\otimes M_n$ is tracially sequentially-split by order zero map for any $n\in \mathbb{N}$.  
\end{prop}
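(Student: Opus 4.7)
The plan is to construct $\psi_n$ and $g_n$ by tensoring with $\id_{M_n}$ the data furnished by the hypothesis for $\phi$, after making a careful choice of witness in $A_\infty$. Throughout I shall use the identification $(A\otimes M_n)_\infty\cong A_\infty\otimes M_n$, which holds because $M_n$ is finite-dimensional. Given a positive nonzero $z\in(A\otimes M_n)_\infty$, the first task will be to produce a positive nonzero $z'\in A_\infty$ with $z'\otimes 1_n \lesssim z$ in $(A\otimes M_n)_\infty$; once $z'$ is available, I feed it into the hypothesis to obtain an order zero map $\psi:B\to A_\infty$ and a positive $g\in A_\infty\cap A'$ with $\psi(\phi(a))=ag$ and $1_{A_\infty}-g\lesssim z'$, and I set $\psi_n:=\psi\otimes\id_{M_n}$ and $g_n:=g\otimes 1_n$.

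To produce $z'$, I would choose a representative $(z_k)$ of $z$ with $\|z_k\|\ge \|z\|/2$ for all $k$ (after passing to a subsequence). Since $A\otimes M_n$ is simple, unital and infinite-dimensional, hence not of type I, Lemma~\ref{L:orthogonal} supplies, for each $k$, pairwise orthogonal and pairwise Cuntz equivalent nonzero positives $b_1^{(k)},\dots,b_n^{(k)}\in A\otimes M_n$ with $\sum_i b_i^{(k)}\in \overline{(z_k-\|z\|/4)_+(A\otimes M_n)(z_k-\|z\|/4)_+}$. A positive element of $A\otimes M_n$ with vanishing diagonal must itself vanish, so for each $k$ at least one compression $e_{ii}b_1^{(k)}e_{ii}$ is nonzero; a pigeonhole argument then fixes a single index $i_0$ after passing to a subsequence. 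Setting $a^{(k)}:=e_{i_0 i_0}b_1^{(k)}e_{i_0 i_0}\in A\otimes e_{i_0 i_0}\cong A$ and rescaling so that $\|a^{(k)}\|=1$, I take $z':=(a^{(k)})_k\in A_\infty$. Conjugating with the matrix units $1_A\otimes e_{ij}$ and applying Lemma~\ref{L:Cuntz}(3) yields $a^{(k)}\otimes e_{ii}\sim a^{(k)}\otimes e_{i_0 i_0}\lesssim b_1^{(k)}\sim b_i^{(k)}$ in $A\otimes M_n$; then orthogonality of the two families, combined with Lemma~\ref{L:Cuntz}(5), gives
\[ a^{(k)}\otimes 1_n = \sum_{i=1}^n a^{(k)}\otimes e_{ii} \;\lesssim\; \sum_{i=1}^n b_i^{(k)} \;\lesssim\; z_k \quad\text{in } A\otimes M_n. \]

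The checks on $\psi_n$ and $g_n$ are then routine. Writing $\psi(\cdot)=h\pi_\psi(\cdot)$ as in Theorem~\ref{T:orderzero}, the map $\psi_n(\cdot)=(h\otimes 1_n)(\pi_\psi\otimes\id_{M_n})(\cdot)$ has the same positive-element-times-commuting-$*$-homomorphism form, so $\psi_n$ is order zero. The element $g_n=g\otimes 1_n$ is positive nonzero, commutes with $A\otimes M_n$, and condition (1) of Definition~\ref{D:sequentiallysplitbyorderzero} is checked on simple tensors via $\psi_n(\phi(a)\otimes m)=ag\otimes m=(a\otimes m)g_n$. Condition (2) reduces, via $1_{(A\otimes M_n)_\infty}-g_n=(1-g)\otimes 1_n\lesssim z'\otimes 1_n$, to showing $z'\otimes 1_n\lesssim z$ in $(A\otimes M_n)_\infty$.

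The main obstacle will be precisely this last step: promoting the fiber-wise comparisons $a^{(k)}\otimes 1_n\lesssim z_k$ in $A\otimes M_n$ to a single Cuntz subequivalence in the sequence algebra. I would address it via the $(\varepsilon,\delta)$-characterization in Lemma~\ref{L:Cuntz}(4)(d): for each $\varepsilon>0$, the explicit structural chain above produces witnesses $(a^{(k)}\otimes 1_n-\varepsilon)_+=(x_k^{(\varepsilon)})^*(z_k-\delta(\varepsilon))_+x_k^{(\varepsilon)}$ with $\delta(\varepsilon)>0$ and $\|x_k^{(\varepsilon)}\|$ uniformly bounded in $k$; the assembled sequence $(x_k^{(\varepsilon)})_k$ represents an element of $(A\otimes M_n)_\infty$ witnessing $(z'\otimes 1_n-\varepsilon)_+\lesssim (z-\delta(\varepsilon))_+$, and Lemma~\ref{L:Cuntz}(4) then closes the subequivalence $z'\otimes 1_n\lesssim z$.
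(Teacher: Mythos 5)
Your overall strategy is the same as the paper's: compress $z$ to a diagonal corner so as to land in (a copy of) $A_\infty$, use Lemma~\ref{L:orthogonal} to manufacture $n$ pairwise orthogonal, pairwise Cuntz equivalent pieces under that corner, feed the resulting element of $A_\infty$ into Definition~\ref{D:sequentiallysplitbyorderzero}, and take $\psi\otimes\id_{M_n}$ and $g\otimes 1_n$ as the new data; your verifications of the order zero property and of condition (1) coincide with the paper's and are fine. The only real difference is where the Cuntz comparison is carried out. The paper works directly inside $M_n(A_\infty)\cong (M_n(A))_\infty$: it sets $z_{11}=(1\otimes E_{11})z(1\otimes E_{11})\lesssim z$, applies Lemma~\ref{L:orthogonal} to $z_{11}$ to get $c_1,\dots,c_n$ in $A_\infty$, chooses $\psi$ with $1-\psi(1)\lesssim c_1$, and concludes from $(1-\psi(1))^{\oplus n}\lesssim c_1+\dots+c_n\lesssim z_{11}\lesssim z$; no passage from fibers to the sequence algebra is ever needed. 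You instead do everything in the fibers $A\otimes M_n$ and then assemble, and that assembly step is where your argument has a genuine gap.

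Two problems. The minor one: you cannot ``pass to a subsequence'' of a representative of a fixed element of $(A\otimes M_n)_\infty$, and if $\liminf_k\|z_k\|=0$ then no representative satisfies $\|z_k\|\ge\|z\|/2$ for all $k$; this is harmless (set $a^{(k)}=0$ on the bad indices, where both sides of the fiberwise comparison vanish), but it needs saying. The serious one is the asserted uniform bound on the witnesses $x_k^{(\varepsilon)}$ and the uniform $\delta(\varepsilon)$. Termwise subequivalences $a_k\lesssim z_k$ do not in general yield $[(a_k)]\lesssim[(z_k)]$ in the sequence algebra; one needs, for each $\varepsilon$, a single $\delta>0$ and witnesses of uniformly bounded norm, and your chain does not deliver them. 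Most links are indeed uniform: conjugation by matrix units is exact; $c^*c\sim cc^*$ and $x\le y\Rightarrow (x-\varepsilon)_+\lesssim y$ admit witnesses whose norms depend only on $\varepsilon$ and $\|x\|$; and membership of $\sum_i b_i^{(k)}$ in $\overline{(z_k-\|z\|/4)_+(A\otimes M_n)(z_k-\|z\|/4)_+}$ gives uniform witnesses for $\sum_i b_i^{(k)}\lesssim z_k$ precisely because of the spectral gap $\|z\|/4$ you built in. But the link $b_1^{(k)}\sim b_i^{(k)}$ comes from Lemma~\ref{L:orthogonal} as a black box: it is an abstract Cuntz equivalence with no control on the norms of the implementing elements or on the rate of convergence, and these may degenerate as $k\to\infty$. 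Since $a^{(k)}$ is defined from $b_1^{(k)}$ alone, you cannot route around this link. To close the argument you would either have to open up the proof of Lemma~\ref{L:orthogonal} and extract witnesses with uniform bounds, or follow the paper and run the entire comparison inside $M_n(A_\infty)$ from the start (noting that this requires justifying the application of Lemma~\ref{L:orthogonal} with ambient algebra $A_\infty$, which is not simple --- a point the paper itself leaves implicit).
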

\begin{proof}
Let $z$ be a nonzero positive element in $(M_{n}(A))_{\omega} \cong M_n(A_{\omega})$. Write $z=(z_{ij})$ where $z_{ij} \in A_{\omega}$.  Note that there exists one $i_0$ such that $z_{i_0 i_0} \neq 0$. Without loss of generality we assume $i_0=1$. Then  
\[ 
z_{11}=(1\otimes E_{11}) z (1\otimes E_{11})  \lesssim z. 
\]
But for $z_{11}$ there exist positive elements $c_1, c_2, \dots, c_n \in A_{\omega}$ such that $c_ic_j=0$, $c_i \sim c_j$ for $i\ne j$, and such that $c_1+\cdots+c_n \lesssim z_{11}$ by Lemma \ref{L:orthogonal}. Now consider an order zero map $\psi$ such that 
\begin{enumerate}
\item[(i)] $\psi(\phi(a))=a\psi(1)$,
\item[(ii)] $1-\psi(1) \lesssim c_1$.
\end{enumerate} 
Then  by Lemma \ref{L:Cuntz}
\[ (1-\psi(1)) \oplus \cdots \oplus (1-\psi(1)) \lesssim c_1+\cdots +c_n \lesssim z_{11} \lesssim z. \]
Therefore 
\[1-(\psi\otimes \id_n)(1_{M_n(B)}) \lesssim z.\]
Moreover, 
\[\begin{split}
(\psi \otimes \id_n)(\phi \otimes \id_n)(a\otimes e_{ij})&=a \psi(1) \otimes e_{ij}\\
 &=(\psi \otimes \id_n )(1_{M_n(B)}) (a\otimes e_{ij}).
\end{split}
\]
Thus we showed that $\phi \otimes \id_n$ has a tracially approximate left inverse $\psi\otimes \id_n$ which is also an order zero map. 
\end{proof}

\begin{thm}\label{T:simple}
Let $A$ be a  unital infinite dimensional $C\sp*$-algebra and $B$ be a unital $C\sp*$-algebra. Suppose that  $\phi:A\to B$ is a unital injective $*$homomorphism which is tracially sequentially-split by order zero map.  If $B$ is simple, then $A$ is simple. Moreover, if $B$ is simple and stably finite, then so is $A$. 
\end{thm}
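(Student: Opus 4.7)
The plan is to argue by contradiction, the essential technical tool being that every c.p.c.\ order-zero map preserves Cuntz subequivalence, including after matrix amplification: using Theorem \ref{T:orderzero} to write $\psi = h\pi$ with $\pi$ a $*$-homomorphism commuting with $h$, if $x = \lim_m v_m y v_m^*$ in $B$ then $\psi(x) = \lim_m \pi(v_m)\,\psi(y)\,\pi(v_m)^*$ in $A_\infty$ (because $h \in \pi(B)'$), so $\psi(x) \lesssim \psi(y)$; the amplification $\psi \otimes \id_n$ remains c.p.c.\ order zero and hence enjoys the same property.

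For the simplicity assertion, suppose a proper nonzero closed ideal $I \trianglelefteq A$ exists, pick $a \in I^+ \setminus \{0\}$ and $b \in A^+ \setminus I$, and first handle the case $\phi(a) \neq 0$. A standard fact in simple unital $C^*$-algebras (every nonzero positive element is full in the Cuntz semigroup) yields $n$ with $\phi(b) \lesssim \phi(a)^{\oplus n}$ in $M_n(B)$. Applying Definition \ref{D:sequentiallysplitbyorderzero} with $z := a$ (constant in $A_\infty$) gives $\psi$ and $g \in A_\infty \cap A'$ with $\psi(\phi(x)) = xg$ and $1 - g \lesssim a$. Amplification via $\psi \otimes \id_n$ transfers the Cuntz relation to $bg \lesssim (ag)^{\oplus n}$, and since $[g,a] = 0$ and $0 \le g \le 1$, $(ag)^{\oplus n} \lesssim a^{\oplus n}$. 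Decomposing $b = bg + b(1-g)$ (using $[g,b]=0$) and applying Lemma \ref{L:Cuntz},
\[
    b \lesssim bg \oplus b(1-g) \lesssim a^{\oplus n} \oplus (1-g) \lesssim a^{\oplus (n+1)}
\]
in $M_{n+1}(A_\infty)$. Since $a \in I$, $a^{\oplus (n+1)}$ lies in the canonical ideal $I_\infty := \ell^\infty(\mathbb{N}, I)/c_0(\mathbb{N}, I) \trianglelefteq A_\infty$, forcing $b \in M_{n+1}(I_\infty) \cap A = I$, contradicting $b \notin I$.

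The remaining --- and most delicate --- case is $I \subseteq \Ker\phi$, which reduces to showing $\Ker\phi = 0$; this is the main obstacle. For this I would run the dual argument: for any $b \in A^+ \setminus \Ker\phi$, since $\phi(b) \neq 0$ in simple $B$ one has $1_B \lesssim \phi(b)^{\oplus N}$ for some $N$; applying the tracial split with $z := b$ and pushing through $\psi \otimes \id_N$ yields $g \lesssim (bg)^{\oplus N} \lesssim b^{\oplus N}$, and combined with $1 - g \lesssim b$ this gives $1_A \lesssim b^{\oplus (N+1)}$ in $M_{N+1}(A_\infty)$, hence in $M_{N+1}(A)$. Thus every positive element outside $\Ker\phi$ is full in $A$; combining this rigid conclusion with the structural injectivity of $\pi$ supplied by Lemma \ref{L:simpleorderzero} (applied to the simple $B$) should rule out a nontrivial $\Ker\phi$ by exhibiting a non-full positive element outside it.

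For stable finiteness, once $A$ is simple $\phi$ is automatically injective (its kernel is a proper ideal of the simple $A$), and so is each amplification $\phi \otimes \id_n \colon M_n(A) \to M_n(B)$. A non-unitary isometry $v \in M_n(A)$ witnessing infiniteness of $1_{M_n(A)}$ would therefore map to a non-unitary isometry in $M_n(B)$, contradicting the stable finiteness of $B$. Hence $A$ is stably finite.
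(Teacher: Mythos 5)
Your argument takes a genuinely different route from the paper's in both halves. For simplicity, the paper picks $x\in I^{+}\setminus\{0\}$, writes $1_B=\sum_i b_i\phi(x)c_i$ using simplicity of $B$, pulls this back through $\psi=g^{1/2}\pi_\psi(\cdot)g^{1/2}$ to get $g=\sum_i b_i'xc_i'$, and adds the relation $1-g\approx rxr^*$ to land $1_A$ inside $I$; you instead run everything through the Cuntz semigroup ($\phi(b)\lesssim\phi(a)^{\oplus n}$ in the simple $B$, push back via $\psi\otimes\id_n$, absorb the error term $b(1-g)\lesssim 1-g\lesssim a$). Your main case is correct as written (and could be shortened by taking $b=1_A$ directly, which gives $1_A$ in the ideal generated by $a$). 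For stable finiteness your argument is actually simpler than the paper's: once $A$ is simple, $\phi$ and its amplifications are injective, so a non-unitary isometry in $M_n(A)$ maps to one in $M_n(B)$; the paper instead applies $\psi$ to $\phi(v)\phi(v)^*=1$ and uses injectivity of $x\mapsto xg$. Both are fine.

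The genuine gap is exactly where you flagged it: the case of a nonzero ideal $I\subseteq\Ker\phi$, i.e., showing $\Ker\phi=0$. Your proposed resolution cannot work as described. What your ``dual argument'' establishes is that every positive element \emph{outside} $\Ker\phi$ is full in $A$; but this is precisely what must hold if the theorem is true (a simple $A$ has \emph{only} full nonzero positive elements), so there is no non-full positive element outside $\Ker\phi$ to exhibit, and hence nothing to contradict. Likewise, Lemma \ref{L:simpleorderzero} applied to the simple $B$ gives injectivity of $\psi$, which tells you nothing about $\Ker\phi$, since for $x\in\Ker\phi$ the relation $xg=\psi(\phi(x))=\psi(0)=0$ is consistent with $\psi$ being injective. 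What one can extract in this case is that any $0\neq x\in(\Ker\phi)^{+}$ satisfies $x=x(1-g)\lesssim 1-g\lesssim z$ for \emph{every} nonzero positive $z\in A_\infty$, but turning that into a contradiction for a general unital infinite-dimensional $A$ requires an additional idea you do not supply. You should be aware that the paper's own proof quietly makes the same assumption (the identity $\sum_i b_i\phi(x)c_i=1$ presupposes $\phi(x)\neq 0$ for the chosen $x\in I$), so you have correctly isolated the one delicate point; in the paper's applications $\phi$ is an embedding or of the form $1\otimes\id$, so the issue is vacuous there, but your proof as submitted does not close it.
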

\begin{proof}
Let $I$ be a nonzero two sided closed ideal of $A$ and take a nonzero positive element  $x$ in $I$.  For $x$ in $I (\subset A \subset A_{\omega})$ we consider a tracially approximate left inverse $\psi:B \to A_{\omega}$ and a positive element $g \in A_{\omega}\cap A'$ such that $\psi(\phi(a))=ag$ for $a\in A$ and $1-g \lesssim x$.  Now we know $\phi(x)\neq 0$. Then there are elements $b_i$'s and $c_i$'s  in  $B$ such that $\sum_{i=1}^n b_i\phi(x)c_i=1_B$ since B is simple. Note that \[\begin{split}
 g&=\psi(1_B)=\psi(\sum_i b_i\phi(x)c_i)=g\pi_{\psi}(\sum_i b_i\phi(x)c_i)=\sum_i g \pi_{\psi}(b_i)\pi_{\psi}(\phi(x))\pi_{\psi}(c_i)\\
&=\sum_i \pi_{\psi}(b_i)g \pi_{\psi}(\phi(x))\pi_{\psi}(c_i)= \sum_i \pi_{\psi}(b_i)g x \pi_{\psi}(c_i)\\
&=\sum_i g^{1/4}\pi_{\psi}(b_i)g^{1/4}x g^{1/4}\pi_{\psi}(c_i)g^{1/4}.
\end{split}\]
Thus we obtained $b_i'=g^{1/4}\pi_{\psi}(b_i)g^{1/4}$ and $c_i'=g^{1/4}\pi_{\psi}(c_i)g^{1/4}$ such that 
 \[ \sum_i b_i' x c_i'=g. \]
On the other hand,  for any $\epsilon >0$ there is an element $r\in A_{\omega}$ such that $\|rxr^*-(1-g)\| < \epsilon$. Therefore we have
\begin{equation*}
\| rxr^*+\sum_i b_i'x c_i'-1\| =\| rxr^*- (1-g) + \sum_i b_i'xc_i'-g\| < \epsilon.
\end{equation*}
Thus  if we represent $r$ as $[(r_n)_n]$, $b_i'$ as $[(b'_{i,n})_n]$, and $c_i'$ as $[(c'_{i,n})_n]$ respectively, we have 
\[
\lim_{ \omega} \| r_nxr_n^* +\sum_i (b'_{i,n}) x(c'_{i,n}) -1_A\| < \epsilon, 
\]
 so there exists a set $X$ in $\omega$ such that for all $n \in X$
\[
 \| r_nxr_n^{*} +\sum_i b^{'}_{i,n} xc^{'}_{i,n} -1_A \| < 2\epsilon.
\]
This means that $1_A \in I$, therefore $A$ is simple.

 Next, suppose $B$ is simple and stably finite. By Proposition \ref{P:amplification}, it is enough to show that if $B$ is finite and simple, then $A$ is finite. Let $v$ be an isometry in $A$. Then consider $\phi(v)$ which is again isometry. But $B$ is finite so that $\phi(v)\phi(v^*)=1$. By applying $\psi$ to both sides, we get 
\[(vv^*-1)g=0.\] 
However, the map from $A$ to $A_{\omega}$  defined by  $x\mapsto xg$ is injective since $A$ is simple. It follows that $1-vv^*=0$, so we are done. 
\end{proof}

We turn to show that the important regularity properties pass from $B$ to $A$ provided that there is a $*$-homomorphism from $A$ to $B$ which is tracially sequentially-split by order zero map. Recall that the Jiang-Su algebra $\mc{Z}$ is a simple separable nuclear and infinite-dimensional $C\sp*$-algebra with a unique trace and the same Elliott invariant with $\mathbb{C}$ \cite{JS}. It is said that $A$ is $\mc{Z}$-stable or $\mc{Z}$-absorbing if $A\otimes \mc{Z}\cong A$. 

\begin{defn}(Hirshberg and Orovitz)\label{D:Z-absorbing}
A unital $C\sp*$-algebra $A$ is called tracially $\mc{Z}$-absorbing if $A \ncong \mathbb{C}$ and for any finite set $F\subset A$, $\epsilon >0$, and nonzero positive element $a\in A$ and $n\in \mathbb{N}$ there is an order zero contraction $\phi:M_n \to A$ such that the following hold:
\begin{enumerate}
\item $1-\phi(1)\lesssim a$, 
\item for any normalized element $x\in M_n$ and any $y\in F$ we have $\|[\phi(x), y]\| < \epsilon $.
\end{enumerate}
\end{defn}

\begin{thm}\label{T:Z-absorbing}
Let $A$ be a simple unital infinite dimensional $C\sp*$-algebra and $B$ be a unital $C\sp*$-algebra.  Suppose that $\phi:A\to B$ is a unital $*$-homomorphism which is tracially sequentially-split by order zero map.  If $B$ is tracially $\mc{Z}$-absorbing, then so is $A$. Thus, if $B$ is $\mc{Z}$-absorbing, then $A$ is also $\mc{Z}$-absorbing provided that $A$ is nuclear.    
\end{thm}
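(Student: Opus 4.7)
The plan is to work in the sequence algebra $A_{\infty}$, construct a c.p.c.\ order zero map $\Phi: M_n \to A_{\infty}$ with the required commutator and Cuntz estimates, and then lift to $A$ via Theorem \ref{T:semiprojective} combined with projectivity of the cone $C_0((0,1])\otimes M_n$. Fix data $F\subset A$ finite, $\epsilon>0$, nonzero $a\in A^{+}$, and $n\in\mathbb{N}$. Since $A$ is simple unital infinite-dimensional (hence not of type I), Lemma \ref{L:orthogonal} supplies nonzero, pairwise orthogonal, pairwise Cuntz equivalent $c_1,c_2\in A^{+}$ with $c_1+c_2\in\overline{aAa}$, so $c_1+c_2\lesssim a$ by Lemma \ref{L:Cuntz}(1). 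Since $A$ is simple, the unital $*$-homomorphism $\phi$ is injective, so $\phi(c_2)\in B^{+}\setminus\{0\}$.

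Pick $\epsilon'\in(0,\epsilon)$. Apply the tracial $\mc{Z}$-absorption of $B$ to the data $(\phi(F),\epsilon',\phi(c_2),n)$ to obtain a c.p.c.\ order zero $\eta:M_n\to B$ with $1_B-\eta(1_{M_n})\lesssim \phi(c_2)$ in $B$ and $\|[\eta(x),\phi(y)]\|<\epsilon'$ for $\|x\|\le 1$, $y\in F$. Apply the tracial sequentially-split hypothesis with test element $z:=c_1$ to obtain a c.p.c.\ order zero $\psi:B\to A_{\infty}$ and $g:=\psi(1_B)\in (A_{\infty}\cap A')^{+}$ satisfying $\psi(\phi(d))=dg$ for all $d\in A$ and $1-g\lesssim c_1$ in $A_{\infty}$. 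Set $\Psi:=\psi\circ\eta:M_n\to A_{\infty}$ (c.p.c.\ order zero, as composition of such) and define
\[
\Phi(x):=g\,\Psi(x)=g^{1/2}\Psi(x)g^{1/2},\quad x\in M_n.
\]
By Theorem \ref{T:orderzero} applied to $\psi$, one has $\psi(b_1)\psi(b_2)=g\,\psi(b_1b_2)$, so $g$ commutes with every element of $\psi(B)$, in particular with $\Psi(x)$; this makes $\Phi$ again c.p.c.\ order zero.

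For the commutator estimate, since $g\in A'$ commutes with $y\in A$ and with $\Psi(x)$,
\[
[\Phi(x),y]=g[\Psi(x),y]=[\Psi(x),y]\,g=g\,\psi\bigl([\eta(x),\phi(y)]\bigr),
\]
so $\|[\Phi(x),y]\|\le \|[\eta(x),\phi(y)]\|<\epsilon'<\epsilon$. For the Cuntz estimate, decompose
\[
1-\Phi(1_{M_n})=(1-g^2)+g\,\psi\bigl(1_B-\eta(1_{M_n})\bigr).
\]
Since $1-g^2\le 2(1-g)$, one has $1-g^2\lesssim 1-g\lesssim c_1$. Order zero maps preserve Cuntz subequivalence (a consequence of Theorem \ref{T:orderzero}), so $\psi(1_B-\eta(1))\lesssim \psi(\phi(c_2))=c_2g\le c_2$, and multiplying by the positive contractive central $g$ keeps it Cuntz below $c_2$. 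As $c_1\perp c_2$, Lemma \ref{L:Cuntz}(5) gives $1-\Phi(1)\lesssim c_1+c_2\lesssim a$ in $A_{\infty}$.

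To finish, lift $\Phi$ to $A$: Theorem \ref{T:semiprojective} identifies $\Phi$ with a $*$-homomorphism $C_0((0,1])\otimes M_n\to A_{\infty}$, which is liftable to $\ell^{\infty}(\mathbb{N},A)$ by projectivity of the cone, yielding c.p.c.\ order zero maps $\Phi_k:M_n\to A$ representing $\Phi$. For $k$ large, $\|[\Phi_k(x),y]\|<\epsilon$ holds by continuity, while the Cuntz condition $1-\Phi_k(1)\lesssim a$ in $A$ follows from the $A_{\infty}$-estimate via Lemma \ref{L:Cuntz}(4)(d) together with a standard perturbation / functional-calculus adjustment on $\Phi_k$. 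This exhibits the desired c.p.c.\ order zero map $M_n\to A$, showing $A$ is tracially $\mc{Z}$-absorbing; the $\mc{Z}$-stability conclusion under the additional nuclearity assumption then follows from Hirshberg-Orovitz \cite{HO}. The principal obstacle is the commutator: the naive composition $\psi\circ\eta$ only yields $\|[(\psi\circ\eta)(x),y]\,g\|<\epsilon'$, a commutator small only \emph{modulo} $g$, and it is the cutoff $\Phi=g\Psi$ that promotes this into a genuine norm estimate---at the mild cost of an extra $(1-g^2)$-term in the Cuntz computation, harmlessly absorbed by $c_1$ via $(1-g^2)\lesssim (1-g)\lesssim c_1$.
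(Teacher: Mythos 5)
Your proposal is correct and follows essentially the same route as the paper's proof: split the test element into two orthogonal nonzero positive pieces via Lemma \ref{L:orthogonal}, spend one of them (pushed forward by $\phi$) on the tracial $\mc{Z}$-absorption of $B$ and the other on the choice of tracial approximate inverse $\psi$, compose to get an order zero map $M_n \to A_{\infty}$ whose commutators with $F$ are controlled and whose unit defect is Cuntz-dominated by the sum of the two pieces, and finally lift to $A$ using Theorem \ref{T:semiprojective} and (semi)projectivity of $C_0((0,1])\otimes M_n$; the lifting step is treated at the same level of detail in the paper. The one point where you diverge is the cutoff $\Phi = g\Psi$, which you present as the essential device ("the principal obstacle"); in fact it is unnecessary. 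The paper works with $\Psi=\psi\circ\eta$ directly and establishes the exact identity $[\Psi(x),y]=\psi([\eta(x),\phi(y)])$, not merely its product with $g$: writing $\psi(\cdot)=h\pi_{\psi}(\cdot)$ with $h=g$ as in Theorem \ref{T:orderzero}, one has $\Psi(x)y=\pi_{\psi}(\eta(x))hy=\pi_{\psi}(\eta(x))\psi(\phi(y))=\psi(\eta(x)\phi(y))$ and symmetrically for $y\Psi(x)$, using that $g\in A_{\infty}\cap A'$ and that $\pi_{\psi}$ takes values in $\{h\}'$. So the naive composition already has small commutators in norm, and the extra $(1-g^2)$ term in your Cuntz computation is avoidable. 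That said, your variant is sound: the identity $[\Psi(x),y]\,g=g\,\psi([\eta(x),\phi(y)])$ does follow from just the two facts you cite ($\psi(b_1)\psi(b_2)=g\psi(b_1b_2)$ and $\psi(\phi(y))=yg$), and your bookkeeping $1-g^2\le 2(1-g)\lesssim c_1$, $g\psi(1-\eta(1))\le\psi(1-\eta(1))\lesssim c_2g\le c_2$, combined through Lemma \ref{L:Cuntz}(5) via $c_1\perp c_2$, is correct. The net effect is a slightly more elementary justification of the commutator bound at the cost of one extra term in the Cuntz estimate.
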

\begin{proof}
Let $F$ be a finite set of $A$, $\epsilon >0$, $n \in \mathbb{N}$, $z$ be a non-zero positive element in $A$.  There are mutually orthogonal positive nonzero elements $z_1, z_2$ in $\overline{zAz}$ such that $z_1+z_2 \lesssim z$.\\
Set $G=\phi(F)$ a finite set in $B$, then for $\phi(z_1)$ there is an order zero contraction  $\phi':M_n(\mathbb{C}) \to B$ such that 
\begin{enumerate}
\item $1-\phi'(1) \lesssim \phi(z_1)$,
\item $\forall x \in M_n(\mathbb{C})$ such that $\|x\|=1$, $\|[\phi'(x), y]\| < \epsilon$ for every $y\in G$.
\end{enumerate}  
For $z_2$ take a tracial approximate left inverse $\psi:B \to A_{\omega}$ for $\phi$ such that $1-\psi(1) \lesssim z_2$. 
Note that $\widetilde{\psi}:=\psi \circ \phi': M_n(\mathbb{C}) \to A_{\omega}$ is an order zero contraction. 
Then 
\begin{equation}\label{E:smallerror}
\begin{split}
1-\widetilde{\psi}(1)&=1-\psi(1) +\psi(1)-\psi(\phi'(1))\\
&=1-\psi(1)+\psi(1-\phi'(1))\\
&\lesssim z_2 +z_1\psi(1) \\
&\lesssim z_2+z_1 \lesssim z.
\end{split}
\end{equation}
Moreover, if we write $\psi=h\pi_{\psi}$,  for $a\in F$
\[
\begin{split}
[\widetilde{\psi}(x), a]&=\psi(\phi'(x))a-a\psi(\phi'(x))\\
&=h\pi_{\psi}(\phi'(x))a-ah\pi_{\psi}(\phi'(x)) \\
&=\pi_{\psi}(\phi'(x))ha-ah\pi_{\psi}(\phi'(x))\\
&=\pi_{\psi}(\phi'(x))\psi(\phi(a))-\psi(\phi(a))\pi_{\psi}(\phi'(x))\\
&=\pi_{\psi}(\phi'(x))h\pi_{\psi}(\phi(a))-h\pi_{\psi}(\phi(a))\pi_{\psi}(\phi'(x))\\
&=h \pi_{\psi}(\phi'(x)\phi(a)-\phi(a)\phi'(x))\\
&=\psi([\phi'(x), \phi(a)]).
\end{split}
\]
Therefore 
\begin{equation} \label{E:estimatefororderzeromap}
\|[\widetilde{\psi}(x), a]\| < \epsilon. 
\end{equation}
Since $C_0((0,1])\otimes M_n(\mathbb{C})$ is projective, there is an order zero lift $\widehat{\psi}: M_n(\mathbb{C}) \to l^{\infty}(\mathbb{N}, A)$ of $\widetilde{\psi}$  by Theorem \ref{T:semiprojective}.  Then we have the following diagram, 

\begin{equation}\label{D:diagram2}
\xymatrix{ &  l^{\infty}(\mathbb{N}, A)\ar[dd]_{\pi_{\omega}}\\ 
 &  \\
  M_n(\mathbb{C}) \ar[ruu]^{\widehat{\psi}} \ar[r]_{\widetilde{\psi}} & A_{\omega} \,. }
\end{equation}
Thus we can write $ \widehat{\psi}(x)=(\widehat{\psi}_k(x))$, where each $\widehat{\psi}_k$ is an order zero map from $M_n(\mathbb{C})$ to $A$  and let $h=[(h_k)_k]$ where $\widehat{\psi}_k(1_{M_n})=h_k$. 

Note that (\ref{E:smallerror}) implies that for  any $\epsilon>0$ there exist $\delta>0$ and $s\in A_{\omega}$ such that 
\[(1-\widetilde{\psi}(1)-\epsilon)_{+}=s(z-\delta)_{+}s^*.\] 
Write $s=[(s_n)_n]$. Then from (\ref{D:diagram2})
\[  \lim_{\omega} \| (1- h_k-\epsilon)_+ -(s_k(z-\delta)_{+}s_k^*) \|=0.\]
It follows that there exists a set $X$ in $\omega$ such that for all $k\in X$ 
\[ \|(1- h_k-\epsilon)_+ - s_k(z-\delta)_{+}s_k^*\| < \epsilon,\]
which implies that 
\[(1-\widehat{\psi}_k(1_{M_n}) -2\epsilon)_{+} \lesssim (z-\delta)_{+}.\] 
Therefore, by Lemma \ref{L:Cuntz} 
\[  1-\widehat{\psi}_k(1_{M_n})  \lesssim z. \]
In addition,  from (\ref{E:estimatefororderzeromap}) 
\begin{equation*}
\| [\widehat{\psi}_k(x), a]\| < 2\epsilon \quad \text {for all normalized $x \in M_n$ and $a \in F$}
\end{equation*}
for such $k$'s in $X$. Hence we showed the existence of an order zero map from $M_n$ to $A$ satisfying the conditions in Definition \ref{D:Z-absorbing}. The last statement follows from \cite[Theorem 4.1]{HO} which is essentially a part of \cite{MS}. 
\end{proof}
\begin{cor}\label{C:tsr}
Let $A$ and $B$ be unital nuclear $C\sp*$-algebras. Suppose that there is a $*$-homomorphism from $A$ to $B$ which is tracially sequentially-split by order zero map. Then if $B$ is simple, (stably) finite, and $\mc{Z}$-absorbing, then $A$ has stable rank one.   
\end{cor}
\begin{proof}
Note that the stable rank of $B$ is one by \cite[Theorem 6.7]{Ro}. By Theorem \ref{T:simple}, \ref{T:Z-absorbing}, $A$ is simple and (stably) finite, and absorbs the Jiang-Su algebra $\mathcal{Z}$. Then again by  \cite[Theorem 6.7]{Ro} $\tsr(A)=1$. 
\end{proof}

 Next we turn to strict comparison of positive elements.    We denote by  $\QT(A)$ the space of normalized 2-quasitraces on $A$. Given $\tau \in \QT(A)$, we define a lower semicontinuous map $d_{\tau}:M_{\infty}(A)^{+} \to \mathbb{R}^{+}$ by 
\[d_{\tau}(a)=\lim_{n \to \infty} \tau(a^{1/n})\]  where $M_{\infty}(A)^{+}$ denotes the positive elements in $M_{\infty}(A)$. 
If  $a\lesssim b$ whenever $d_{\tau}(a) < d_{\tau}(b)$ for every $\tau \in \QT(A)$, then we say that $A$ has \emph{strict comparison of positive elements} or shortly \emph{strict comparison} \cite{ET}.  If we let $\T(A)$ be the space of tracial states of $A$,  $\QT(A)=\T(A)$ when $A$ is exact by \cite{Ha}.

\begin{defn} \cite[Definition 3.1]{Phillips:Large}
Let $A$ be a $C\sp*$-algebra and $a \in (\mathbb{K}\otimes A)^{+}$ is called purely positive if $a$ is not Cuntz equivalent to a projection in  $(\mathbb{K}\otimes A)^{+} $. We denote $\Cu_{+}(A)$
by the set of elements  $\eta \in \Cu(A)$ which are not the classes  of projections, and similarly $W_{+}(A)$ by the set of elements $\eta \in W(A)$ which are not the classes of projections. 
\end{defn}

\begin{prop}\label{P:cuntz}
Let $A, B$ be unital, separable, simple $C\sp*$-algebras and assume that there is a unital $*$-homomorphism $\phi:A \to B$ which is tracially sequentially-split by order zero map. If $\phi(a) \lesssim \phi(b)$ for two positive elements $a, b \in A$ with $b$ being purely positive, then $a\lesssim b$.
\end{prop}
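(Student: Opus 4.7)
The plan is to reduce to showing $(a-\eta)_+\lesssim b$ for arbitrary $\eta>0$ via Lemma~\ref{L:Cuntz}(4). Fix such $\eta$; since $\phi$ is a $*$-homomorphism, $\phi((a-\eta)_+) = (\phi(a)-\eta)_+$, and Lemma~\ref{L:Cuntz}(4)(c) applied to $\phi(a)\lesssim \phi(b)$ in $B$ produces some $\delta>0$ with $\phi((a-\eta)_+)\lesssim \phi((b-\delta)_+)$. The purely positive hypothesis enters next: since $b$ is not Cuntz equivalent to any projection, $0$ cannot be isolated in $\sigma(b)$, for otherwise $\chi_{[\delta,\|b\|]}(b)\in C^*(b)$ would be a nonzero projection Cuntz equivalent to $b$. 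Continuous functional calculus then furnishes a nonzero positive $b_0:=f(b)\in\overline{bAb}$ for some $f\geq 0$ supported in $(0,\delta)$, and by construction $b_0\perp(b-\delta)_+$ in $A$.

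Next I would feed $z=b_0\in A\subseteq A_\infty$ into the tracial splitting hypothesis to obtain an order zero map $\psi:B\to A_\infty$ and a positive $g\in A_\infty\cap A'$ satisfying $\psi(\phi(c))=cg$ for all $c\in A$ and $1_{A_\infty}-g\lesssim b_0$. The crux of the argument is that any order zero map preserves Cuntz subequivalence: writing $\psi(\cdot)=h\pi_\psi(\cdot)$ by Theorem~\ref{T:orderzero}, with $\pi_\psi$ a $*$-homomorphism and $h=\psi(1_B)=g$ commuting with the image of $\pi_\psi$, any Cuntz witness $v_n y v_n^*\to x$ in $B$ passes under $\pi_\psi$ and multiplication by the central $h$ to $\pi_\psi(v_n)\psi(y)\pi_\psi(v_n)^*\to\psi(x)$, so $\psi(x)\lesssim\psi(y)$. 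Applied to $\phi((a-\eta)_+)\lesssim\phi((b-\delta)_+)$ this yields $(a-\eta)_+g\lesssim(b-\delta)_+g$, and since $g$ commutes with $(b-\delta)_+$ and $0\leq g\leq 1$ the latter is in turn $\lesssim(b-\delta)_+$.

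To conclude I would decompose $(a-\eta)_+=(a-\eta)_+g+(a-\eta)_+(1-g)$, a sum of commuting positive elements. The first summand is dominated by $(b-\delta)_+$ by the previous step; the second, being $(1-g)^{1/2}(a-\eta)_+(1-g)^{1/2}\in\overline{(1-g)A_\infty(1-g)}$, is $\lesssim 1-g\lesssim b_0$ by Lemma~\ref{L:Cuntz}(1). Combining the elementary inequality $x+y\lesssim x\oplus y$ for positive $x,y$ with Lemma~\ref{L:Cuntz}(5) gives $(a-\eta)_+\lesssim (b-\delta)_+\oplus b_0$, and the orthogonality $(b-\delta)_+\perp b_0$ in $A$ together with Lemma~\ref{L:Cuntz}(2) rewrites this as $(b-\delta)_++b_0\in\overline{bAb}$, which is $\lesssim b$ by Lemma~\ref{L:Cuntz}(1).

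The principal obstacle is the middle step: an order zero map is not a $*$-homomorphism, so preservation of Cuntz comparison must be extracted from the $h$-times-$\pi_\psi$ structure and the centrality of $h$. Simultaneously, the ``slack'' $1-g$ created by tracial splitting must be absorbed by a spectral piece of $b$ orthogonal to the piece that dominates $(a-\eta)_+$. This is precisely where the purely positive hypothesis on $b$ is indispensable: without it, one could not carve out a nonzero $b_0\in\overline{bAb}$ orthogonal to $(b-\delta)_+$ and dominating $1-g$.
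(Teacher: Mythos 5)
Your proposal is correct and follows essentially the same route as the paper's proof: the same reduction to $(a-\eta)_+\lesssim b$, the same use of pure positivity to carve out a nonzero spectral piece of $b$ supported in $(0,\delta)$ and orthogonal to $(b-\delta)_+$, the same choice of that piece as the element $z$ in the splitting hypothesis, and the same decomposition $(a-\eta)_+=(a-\eta)_+g+(a-\eta)_+(1-g)$ absorbed into the orthogonal sum inside $\overline{bAb}$. The only cosmetic difference is that you invoke the general fact that order zero maps preserve Cuntz subequivalence, whereas the paper passes through the explicit element $(\phi(b)-\delta)_+^{1/2}rr^*(\phi(b)-\delta)_+^{1/2}$; both rest on the same $h\pi_\psi$ structure.
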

\begin{proof}
The role of a tracial approximate left inverse for $\phi$ is not changed under the weaker assumption that it is an order zero map rather than a  $*$-homomorphism since an order zero map still preserves the relation $\lesssim$. See the proof of \cite[Proposition 2.18]{LO1}.   
\end{proof}

\begin{thm}\label{T:strictcomparison}
Let $A$ be a unital, stably finite, simple, exact,  separable,  infinite dimensional $C\sp*$-algebra which is not type I and $B$ a unital, stably finite, simple, exact, separable,  infinite dimensional $C\sp*$-algebra. Suppose that there is a unital $*$-homomorphism $\phi:A \to B$ which is tracially sequentially-split by order zero map.   If $B$ has strict comparison, so does $A$.   
\end{thm}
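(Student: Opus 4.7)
The plan is to verify strict comparison in $A$ directly: show that $a \lesssim b$ whenever $a, b \in M_k(A)^{+}$ satisfy $d_\tau(a) < d_\tau(b)$ for every $\tau \in T(A)$. Since $B$ is simple and stably finite, Theorem~\ref{T:simple} makes $A$ simple and stably finite as well, so Lemmas~\ref{L:purelypositive} and~\ref{L:difficult} apply to $A$. By Proposition~\ref{P:amplification}, the amplification $\phi \otimes \id_k$ is again tracially sequentially-split by order zero map (assuming, as seems implicit, that $\phi$ is unital), and $M_k(A)$, $M_k(B)$ inherit the remaining hypotheses; so I may reduce to the case $k = 1$ and work with $a, b \in A^{+}$.

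The argument then splits on whether $b$ is purely positive. First, assume $b$ is purely positive. For every $\sigma \in T(B)$, the functional $\tau := \sigma \circ \phi$ is a tracial state on $A$, and $d_\sigma(\phi(x)) = d_\tau(x)$ for $x \in A^{+}$. The hypothesis therefore gives $d_\sigma(\phi(a)) < d_\sigma(\phi(b))$ for every $\sigma \in T(B)$, so strict comparison in $B$ yields $\phi(a) \lesssim \phi(b)$, and Proposition~\ref{P:cuntz} pulls this back to $a \lesssim b$.

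If $b$ is not purely positive, Lemma~\ref{L:purelypositive} gives $b \sim p$ for some nonzero projection $p \in A$, and the task reduces to $a \lesssim p$. By Lemma~\ref{L:Cuntz}(4) it suffices to show $(a-\epsilon)_{+} \lesssim p$ for each fixed $\epsilon > 0$. Applying Lemma~\ref{L:difficult} to $\langle p \rangle$, a positive integer $n$, and $\xi = \langle 1_A \rangle$ produces purely positive $\mu, \kappa \in W_{+}(A)$ with $\mu \le \langle p \rangle \le \mu + \kappa$ and $n\kappa \le \langle 1_A \rangle$; the last inequality forces $d_\tau(\kappa) \le 1/n$ uniformly in $\tau$, whence $d_\tau(\mu) \ge d_\tau(b) - 1/n$. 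A compactness argument on $T(A)$ combined with lower semicontinuity of the rank functions should upgrade the pointwise inequality $d_\tau((a-\epsilon)_{+}) < d_\tau(b)$ to a uniform gap $\eta > 0$; choosing $n > 1/\eta$ then makes $d_\tau((a-\epsilon)_{+}) < d_\tau(\mu)$ uniformly. Since $\mu$ is purely positive, the first case applies to $(a-\epsilon)_{+}$ against a representative of $\mu$ and delivers $(a-\epsilon)_{+} \lesssim \mu \le \langle p \rangle \sim b$, as desired.

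The main obstacle is exactly this uniform-gap step in the second case: the rank functions $\tau \mapsto d_\tau(\cdot)$ are only lower semicontinuous, so promoting the strict pointwise inequality $d_\tau(a) < d_\tau(b)$ to a uniform one takes care. The standard device is to bound $d_\tau((a-\epsilon)_{+})$ above by a continuous function of $\tau$ of the form $\tau(f(a))$ for a suitable continuous cutoff $f$ vanishing near $0$ and identically $1$ above $\epsilon$, and then combine the compactness of $T(A)$ with the lower semicontinuity of $d_\tau(b)$ to extract $\eta$. Modulo this technical point, the rest is a clean chain of Proposition~\ref{P:cuntz}, the trace-pullback identity, and Lemma~\ref{L:difficult}.
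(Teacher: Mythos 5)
Your proof is correct, and its skeleton matches the paper's: push the hypothesis forward along $\phi$ using traces of the form $\sigma\circ\phi$, apply strict comparison in $B$, pull the Cuntz relation back via Proposition~\ref{P:cuntz}, and split on whether $b$ is purely positive, invoking Lemma~\ref{L:difficult} in the projection case. The two technical steps are handled differently, though. For the traces, you use only that each $\sigma\in T(B)$ gives $\sigma\circ\phi\in T(A)$ (so $d_\sigma(\phi(a))<d_\sigma(\phi(b))$), which is all the implication requires; the paper instead proves that $T(\phi):T(B)\to T(A)$ is \emph{surjective}, by pushing a trace of $A$ through the order zero approximate inverses $\psi_n$ and taking a weak-$*$ limit --- extra work that the argument does not actually use. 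In the projection case, you take Lemma~\ref{L:difficult} as stated and close the gap between $d_\tau(\mu)\ge d_\tau(p)-1/n$ and $d_\tau\bigl((a-\epsilon)_+\bigr)$ by a uniform-gap argument: dominate $d_\tau\bigl((a-\epsilon)_+\bigr)$ by the weak-$*$ continuous function $\tau\mapsto\tau(f(a))$ for a cutoff $f$ supported away from $0$, note $\tau(f(a))\le d_\tau\bigl((a-\epsilon/2)_+\bigr)<d_\tau(b)$, and minimize the lower semicontinuous function $\tau\mapsto d_\tau(b)-\tau(f(a))$ over the compact set $T(A)$. That is the standard device and it is sound. The paper instead extracts from the \emph{proof} of Lemma~\ref{L:difficult} orthogonal elements $c,d\in pAp$ with $\langle c\rangle\le\langle p\rangle\le\langle c\rangle+\langle d\rangle$ and $d_\tau(d)<\epsilon$, so that $d_\tau(c+d)\ge d_\tau(p)>d_\tau(a)$ holds pointwise and no compactness is needed, and then applies Proposition~\ref{P:cuntz} to $a$ versus $c+d$. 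That shortcut buys a one-line second case, but it leans on the assertion that $c+d$ is purely positive, which sits uneasily with $c+d\in pAp$ and $\langle p\rangle\le\langle c+d\rangle$ (together these force $c+d\sim p$); your version, at the cost of the compactness step, avoids that delicate point. Two small things to make explicit in a final write-up: the representative of $\mu$ may live in a matrix algebra over $A$, so the final application of the purely positive case should be read in the amplified setting you set up at the start (this is exactly what Proposition~\ref{P:amplification} provides), and the unitality of $\phi$ that you flag is indeed needed both there and in Proposition~\ref{P:cuntz}.
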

\begin{proof}
Again the proof is almost same as one in \cite[Theorem 2.22]{LO1} with a care that the composition of a trace and an order zero map is again a trace  by \cite[Corollary 4.4]{WZ}.   Moreover,  a tracial approximate left inverse for $\phi$ is needed in Proposition \ref{P:cuntz} only and \cite[Lemma 3.6]{Phillips:Large} plays a critical role to control elements in $A$ before applying Proposition \ref{P:cuntz}. 
\end{proof}

%%%%%%%%%%%%%%%%%%%%%%%%%%%%%%%%%%%%%%%%%%%%%%%%%%%%%%%%%%%%%%%%%%%%%%%%%%%%%%%%%%%%
\section{Applications}\label{S:Examples}

In this section, all groups $G$ are assumed to be discrete and its action on a $C\sp*$-algebra  $A$ given by the map $\alpha:G \to \Aut(A)$ is denoted by $\alpha: G \curvearrowright A$.  
\begin{defn}
Let $A$ and $B$ be unital $C\sp*$-algebras. Given two actions  $\alpha:G \curvearrowright A$, $\beta:G \curvearrowright B$, an equivariant $*$-homomorphism $\phi:(A, \alpha) \to (B, \beta) $ is called  $G$-tracially sequentially-split by order zero map, if for every nonzero positive element $z\in A_{\omega}$ there exists an equivariant tracial approximate left inverse $\psi:(B, \beta) \to (A_{\omega},\alpha_{\omega})$ which has order zero. 
\end{defn}

\begin{defn}(Hirshberg and Orovitz \cite{HO})
Let $G$ be a finite group and $A$ be a separable unital $C\sp*$-algebra. We say that $\alpha: G \curvearrowright A$ has the generalized tracial Rokhlin property if  for every finite set $F \subset A$, every $\epsilon>0$, any nonzero positive element $x\in A$ there exist normalized positive contractions $\{e_g\}_{g\in G}$ such that 
\begin{enumerate}
\item $e_g \perp e_h$ when $g\ne h$,
\item $\| \alpha_g(e_h)-e_{gh} \| \le \epsilon$, \quad for all $g, h \in G$,
\item $\|  e_gy -ye_g \| \le \epsilon$, \quad for all  $g \in G$, $ y \in F$,
\item  $1-\sum_{g \in G}e_g \lesssim x$. 
\end{enumerate}
\end{defn}

In virtue of $A_{\omega}$ we can express the generalized tracial Rokhlin property of $\alpha: G \curvearrowright A$ with exact relations as follows.  
\begin{thm}\label{T:tracialRokhlinaction}
Let $G$ be a finite group and $A$ be a separable unital $C\sp*$-algebra. Suppose that  $\alpha:G \curvearrowright A$ has the generalized tracial Rokhlin property. Then  for any nonzero positive element $x \in A_\omega$ there exist mutually orthogonal  positive contractions  $e_g$'s in $A_{\omega}\cap A'$ such that 
\begin{enumerate}
\item $\alpha_{\omega, g}(e_h)=e_{gh}$ for all $g,h\in G$, where $\alpha_{\omega}:G \curvearrowright A_{\omega} $ is the induced action,  
\item 
$1-\sum_{g \in G} e_g \lesssim x$.
\end{enumerate}
\end{thm}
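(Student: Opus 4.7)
The plan is a standard diagonal/reindexing argument. We will apply the generalized tracial Rokhlin property at each level $n$ with tolerance $1/n$ and finite sets $F_n$ exhausting a countable dense subset of $A$, and assemble the resulting Rokhlin towers into sequences that define the required elements of $A_\infty \cap A'$. Exact orthogonality, centrality, and equivariance in $A_\infty$ will come essentially for free from the limiting procedure; the technical heart of the argument is the transfer of the Cuntz subequivalence $1 - \sum_g e_g^{(n)} \lesssim (x_n - 1/n)_+$ inside $A$ into the desired subequivalence $1 - \sum_g e_g \lesssim x$ inside $A_\infty$.

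Fix a countable dense sequence $(a_k)_k \subset A$ and a representative $(x_n)_n$ of $x \in A_\infty^+ \setminus \{0\}$. After passing to a subsequence (and re-indexing), we may assume $\|x_n\| \ge \|x\|/2 > 0$ for all $n$. For each $n$ with $1/n < \|x\|/4$, apply the generalized tracial Rokhlin property with $F_n := \{a_1, \ldots, a_n\}$, $\epsilon = 1/n$, and the nonzero positive element $(x_n - 1/n)_+ \in A^+$ to obtain positive contractions $\{e_g^{(n)}\}_{g \in G} \subset A$ satisfying the four defining conditions. Setting $e_g := [(e_g^{(n)})_n] \in A_\infty$, we get $e_g \perp e_h$ for $g \ne h$ (exact at each finite stage), $e_g \in A_\infty \cap A'$ (since $\|[e_g^{(n)}, a_k]\| \le 1/n$ for $n \ge k$ and $(a_k)_k$ is dense in $A$), and $\alpha_{\infty, g}(e_h) = e_{gh}$ (since $\|\alpha_g(e_h^{(n)}) - e_{gh}^{(n)}\| \le 1/n \to 0$).

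It remains to prove $y := 1 - \sum_g e_g \lesssim x$ in $A_\infty$. Set $y_n := 1 - \sum_g e_g^{(n)}$, so $y = [(y_n)_n]$ and $y_n \lesssim (x_n - 1/n)_+$ in $A$ for each $n$. By Lemma \ref{L:Cuntz}(4) it suffices to show $(y - \eta)_+ \lesssim x$ in $A_\infty$ for each $\eta > 0$. Using the standard characterization of Cuntz subequivalence, we pick for each $n$ an element $t_n \in A$ with $t_n t_n^* = (y_n - \eta)_+$, $t_n^* t_n \in \overline{(x_n - 1/n)_+ A (x_n - 1/n)_+}$, and $\|t_n\|^2 = \|(y_n - \eta)_+\| \le 1$. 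The uniform bound on $\|t_n\|$ lets us define $T := [(t_n)_n] \in A_\infty$, and then $T T^* = (y - \eta)_+$ in $A_\infty$.

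The main obstacle is to verify $T^* T \lesssim x$ in $A_\infty$; once this is done, Lemma \ref{L:Cuntz}(3) gives $(y - \eta)_+ = T T^* \sim T^* T \lesssim x$ in $A_\infty$, and the conclusion follows since $\eta > 0$ was arbitrary. Each $t_n^* t_n$ lies in $\overline{(x_n - 1/n)_+ A (x_n - 1/n)_+} \subset \overline{x_n A x_n}$ with $\|t_n^* t_n\| \le 1$, and we have the uniform norm lower bound $\|(x_n - 1/n)_+\| \ge \|x\|/4$. Exploiting these, a careful diagonal selection should produce a bounded sequence $(c_n)_n \subset A$ with $\|x_n^{1/2} c_n x_n^{1/2} - t_n^* t_n\| \to 0$. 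Then $C := [(c_n)_n] \in A_\infty$ would satisfy $T^* T = x^{1/2} C x^{1/2} \in \overline{x A_\infty x}$, yielding $T^* T \lesssim x$ by Lemma \ref{L:Cuntz}(1) and completing the argument.
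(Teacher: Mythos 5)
Your assembly of the towers $e_g=[(e_{g}^{(n)})_n]$ and the verification of orthogonality, centrality and equivariance is exactly the paper's argument and is fine; the issue is the last step, which you yourself flag with ``should produce''. That step is a genuine gap, not a routine diagonalization. From $t_n^*t_n\in\overline{(x_n-1/n)_+A(x_n-1/n)_+}\subset\overline{x_nAx_n}$ one cannot extract $c_n$ with $\|c_n\|$ uniformly bounded and $\|x_n^{1/2}c_nx_n^{1/2}-t_n^*t_n\|\to 0$: because your cutoffs $1/n$ shrink to $0$, the element $t_n^*t_n$ is only known to live where $x_n$ is of size about $1/n$, and a local unit for that hereditary subalgebra written in the form $x_n^{1/2}cx_n^{1/2}$ costs $\|c\|\gtrsim n$. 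Concretely, nothing in your construction excludes the behaviour of $A=C[0,1]$, $x_n(t)=t$, $t_n^*t_n$ a height-one bump supported in $[1/n,2/n]$ (which does vanish on $[0,1/n]$), where any $c$ with $tc(t)\approx 1$ near $t=1/n$ has $\|c\|\ge n/2$. So the sequence $(c_n)$ you need simply may not exist.

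The repair is small but essential: use a \emph{fixed} cutoff. Fix $\delta\in(0,\|x\|/4)$ and feed $(x_n-\delta)_+$ (not $(x_n-1/n)_+$) to the generalized tracial Rokhlin property at stage $n$. Then $t_n^*t_n\in\overline{(x_n-\delta)_+A(x_n-\delta)_+}$ for every $n$, and if $g$ is $0$ on $[0,\delta/2]$, $1$ on $[\delta,\infty)$ and linear in between, then $g(x_n)t_n^*t_n=t_n^*t_n$ while $g(x_n)=x_nh(x_n)$ with $h(t)=g(t)/t$ bounded by $2/\delta$ \emph{independently of $n$}. Hence $G:=[(g(x_n))_n]=xH$ with $H:=[(h(x_n))_n]\in A_\infty$, and $T^*T=G(T^*T)G=xH(T^*T)Hx\in\overline{xA_\infty x}$, giving $T^*T\lesssim x$ by Lemma \ref{L:Cuntz}(1) and closing your argument. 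Two further cautions: (i) you cannot literally ``pass to a subsequence'' of a representative of $x$ without changing the element of $A_\infty$, so if $\liminf\|x_n\|=0$ the normalization $\|x_n\|\ge\|x\|/2$ needs a separate device (e.g.\ first replace $x$ by a nonzero $(x-\epsilon_0)_+\lesssim x$ and handle the sparse indices where $\|x_n\|$ is small by hand); (ii) the paper's own proof is terse at exactly the same spot --- it asserts without justification that the witnesses $r_n$ for $1-\sum_ge_{g,n}\lesssim x_n$ may be chosen uniformly bounded because $(x_n)$ is ``uniformly away from zero'' --- so your route via $t_nt_n^*=(y_n-\eta)_+$ and $t_n^*t_n$ in a hereditary subalgebra is actually the sounder skeleton; it just needs the fixed-$\delta$ uniformization to be complete.
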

\begin{proof}
The proof is almost same as the first part in the proof of \cite[Theorem 3.3]{LO1}; when we write $x=[(x_n)_n]$ where $x_n$'s are nonzero positive elements  in $A$, we can construct a sequence $\{e_{g,n}\}$ of positive contractions such that $1-\sum_{g\in G} e_{g,n} \lesssim x_n$. Then for given $\epsilon$ we can construct $r \in A_{\omega}$ such that $\| r x r^*-(1- \sum_{g\in G} e_g)    \| < \epsilon$
where $e_g=[(e_{g,n})_n]$. This implies that $1-\sum_{g \in G} e_g \lesssim x$.
\end{proof}

Let $C(G)$ be the algebra of complex valued continuous functions on $G$  and  $\sigma:G \curvearrowright C(G)$ the canonical translation action. 

\begin{thm}\label{T:tracialRokhlinaction2}
Let $G$ be a finite group and $A$ a separable unital $C\sp*$-algebra. Suppose that $\alpha:G \curvearrowright A$ has the generalized tracial Rokhlin property. Then for every nonzero positive element $x$ in $A_{\omega}$ there exists a $*$-equivariant order zero map $\phi$ from $(C(G), \sigma)$ to $(A_{\omega}\cap A', \alpha_{\omega})$ such that $1-\phi(1_{C(G)}) \lesssim x$ in $A_{\omega}$.  
\end{thm}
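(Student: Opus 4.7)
The plan is to apply Theorem \ref{T:tracialRokhlinaction} and then package the resulting Rokhlin-type elements into a c.p.c.\ order zero map out of $C(G)$. Given a nonzero positive $x \in A_{\infty}$, Theorem \ref{T:tracialRokhlinaction} produces mutually orthogonal positive contractions $\{e_g\}_{g \in G}$ in $A_{\infty} \cap A'$ satisfying $\alpha_{\infty,g}(e_h) = e_{gh}$ and $1 - \sum_{g \in G} e_g \lesssim x$. I would define $\phi \colon C(G) \to A_{\infty} \cap A'$ on the canonical basis of minimal projections by $\phi(\delta_g) := e_g$, where $\delta_g$ is the characteristic function of $\{g\}$, and extend by linearity.

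Since $C(G) \cong \mathbb{C}^{|G|}$ is commutative and finite-dimensional, any positive linear map from it is automatically completely positive; as each basis element $\delta_g \geq 0$ is sent to $e_g \geq 0$, the map $\phi$ is c.p., and it is contractive by $\|e_g\| \leq 1$ together with mutual orthogonality. For the order zero property, if $f, h \in C(G)^{+}$ satisfy $fh = 0$, write $f = \sum_g c_g \delta_g$ and $h = \sum_g d_g \delta_g$ with $c_g d_g = 0$ for every $g$; then $\phi(f)\phi(h) = \sum_{g,k} c_g d_k e_g e_k$, where the diagonal terms vanish because $c_g d_g = 0$ and the off-diagonal terms because $e_g \perp e_k$ for $g \neq k$. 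Thus $\phi$ has order zero.

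Equivariance is a one-line check on basis elements: since $\sigma_g(\delta_h) = \delta_{gh}$ for the translation action, we get
\[
\phi(\sigma_g(\delta_h)) = e_{gh} = \alpha_{\infty,g}(e_h) = \alpha_{\infty,g}(\phi(\delta_h)),
\]
which extends by linearity. Finally, $\phi(1_{C(G)}) = \sum_{g \in G} e_g$, so condition (2) of Theorem \ref{T:tracialRokhlinaction} gives $1_{A_\infty} - \phi(1_{C(G)}) \lesssim x$ in $A_\infty$. The only step requiring any care is the verification that $\phi$ is c.p.c.\ order zero, but this is essentially forced by the finite-dimensional commutative structure of $C(G)$ together with the pairwise orthogonality of the $e_g$; no real obstacle is expected, since all substantive work has already been absorbed into Theorem \ref{T:tracialRokhlinaction}.
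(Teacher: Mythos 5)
Your proof is correct and follows essentially the same route as the paper: the paper likewise invokes Theorem \ref{T:tracialRokhlinaction} and sets $\phi(f)=\sum_{g}f(g)e_g$, leaving the verification that this is an equivariant order zero map as routine. Your write-up simply fills in those routine checks explicitly.
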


\begin{proof}
By Theorem \ref{T:tracialRokhlinaction}, for any nonzero positive $x\in A_{\omega}$ we can take mutually orthogonal positive contractions $e_g$'s in $A_{\omega}\cap A'$ such that $1-\sum e_g \precsim x$. Then we define $\phi(f)=\sum_g f(g)e_g$ for $f\in C(G)$. It follows that it is an order zero map and $1-\phi(1_{C(G)})=1-\sum_g e_g \precsim x$ .  Using the condition (1) in Theorem \ref{T:tracialRokhlinaction}, it is easily shown that $\phi$ is equivariant.
\end{proof}

\begin{cor}\label{C:leftinverse}
Let $G$ and $A$ be as same as Theorem \ref{T:tracialRokhlinaction2}. Suppose that  $\alpha:G \curvearrowright A$ has the generalized tracial Rokhlin property. Then the map $1_{C(G)}\otimes \id_A : (A, \alpha) \to (C(G)\otimes A, \sigma \otimes \alpha)$ is $G$-tracially sequentially-split by order zero map. 
\end{cor}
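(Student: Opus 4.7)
The plan is to leverage Theorem~\ref{T:tracialRokhlinaction2} to obtain a Rokhlin-type order zero map from $C(G)$ into $A_\infty \cap A'$ and then promote it to an order zero map out of the whole tensor product $C(G) \otimes A$ via the universal property of cones (Theorem~\ref{T:semiprojective}).

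Fix a nonzero positive $z \in A_\infty$. Theorem~\ref{T:tracialRokhlinaction2} yields an equivariant c.p.c.\ order zero map
\[
\phi : (C(G), \sigma) \longrightarrow (A_\infty \cap A', \alpha_\infty)
\]
with $1_{A_\infty} - \phi(1_{C(G)}) \lesssim z$. Write $h := \phi(1_{C(G)}) \in A_\infty \cap A'$ and let $\iota_A : A \hookrightarrow A_\infty$ be the canonical constant-sequence embedding. The candidate for the tracial approximate left inverse of $1_{C(G)} \otimes \id_A$ is the map $\psi : C(G) \otimes A \to A_\infty$ which on elementary tensors takes the form $\psi(f \otimes a) = \phi(f) \cdot \iota_A(a)$.

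To see that $\psi$ is well-defined and c.p.c.\ order zero, I would pass through Theorem~\ref{T:semiprojective}: the order zero map $\phi$ corresponds to a $*$-homomorphism $\varrho_\phi : C_0((0,1]) \otimes C(G) \to A_\infty \cap A'$. Because the image of $\varrho_\phi$ lies in $A_\infty \cap A'$ and hence commutes with $\iota_A(A)$, and because $C_0((0,1]) \otimes C(G)$ is commutative (hence nuclear), the pair $(\varrho_\phi, \iota_A)$ extends uniquely to a $*$-homomorphism
\[
\varrho : C_0((0,1]) \otimes C(G) \otimes A \longrightarrow A_\infty, \quad t \otimes f \otimes a \longmapsto \varrho_\phi(t \otimes f)\, \iota_A(a).
\]
Regrouping $\varrho$ as a $*$-homomorphism out of $C_0((0,1]) \otimes (C(G) \otimes A)$ and applying the converse direction of Theorem~\ref{T:semiprojective} produces the desired c.p.c.\ order zero map $\psi$ on the full tensor product.

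Finally I would check the two conditions of Definition~\ref{D:sequentiallysplitbyorderzero}. For the tracial approximate left inverse property, for any $a \in A$,
\[
\psi\bigl((1_{C(G)} \otimes \id_A)(a)\bigr) = \psi(1_{C(G)} \otimes a) = h \cdot a = a \cdot h,
\]
since $h \in A_\infty \cap A'$; together with $1_{A_\infty} - h \lesssim z$ this is exactly what the definition demands. Equivariance is a direct computation using equivariance of $\phi$: for $g \in G$,
\[
\psi\bigl((\sigma_g \otimes \alpha_g)(f \otimes a)\bigr) = \phi(\sigma_g(f))\, \alpha_g(a) = \alpha_{\infty,g}(\phi(f))\, \alpha_{\infty,g}(\iota_A(a)) = \alpha_{\infty,g}(\psi(f \otimes a)).
\]
The only genuine obstacle is checking that $\psi$ is order zero rather than merely completely positive; the universal-property route through $C_0((0,1]) \otimes C(G) \otimes A$ is the cleanest way to achieve this without manually unwinding the structure theorem (Theorem~\ref{T:orderzero}).
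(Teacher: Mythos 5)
Your proposal is correct and follows essentially the same route as the paper: both obtain the equivariant order zero map $\phi:C(G)\to A_\infty\cap A'$ from Theorem \ref{T:tracialRokhlinaction2} and then define the approximate left inverse on $C(G)\otimes A$ by $f\otimes a\mapsto \phi(f)a$, using that the range of $\phi$ commutes with the constant copy of $A$. The paper simply asserts the identification of $(A_\infty\cap A')\otimes A$ with a subalgebra of $A_\infty$ via multiplication and that $\phi\otimes\id_A$ works; your detour through $C_0((0,1])\otimes C(G)\otimes A$ and Theorem \ref{T:semiprojective} is just a careful justification of the same construction.
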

\begin{proof}
Consider a $*$-homomorphism $m: ((A_{\omega}\cap A')\otimes A, \alpha_{\omega}\otimes \alpha) \to (A_{\omega}, \alpha_{\omega})$ defined by sending $\mathbf{a}\otimes x$ to $\mathbf{a}x$ (see \cite{BS}).  It is easily checked that $m$ is equivariant. Then for any nozero  positive $z \in A_{\omega}$ consider a $*$-equivariant order zero map $\phi$ from $(C(G), \sigma)$ to $(A_{\omega}\cap A', \alpha_{\omega})$ such that $1- \phi(1_{C(G)}) \lesssim z$ in $A_{\omega}$ as in Theorem \ref{T:tracialRokhlinaction2}. Then  $\psi:=m \circ (\phi\otimes \id_A)$ is a $*$-equivariant order zero map  such that the following diagram holds.

\[ \xymatrix{ (A,\alpha) \ar[rd]_{1_{C(G)}\otimes \id_A} \ar@{-->}[rr]^{\iota} && ((A_{\omega}\cap A' )\otimes A, \alpha_{\omega} \otimes \alpha) \ar[r]^-{m} &(A_{\omega}, \alpha_{\omega}) \\
                          & (C(G)\otimes A, \sigma\otimes \alpha)\, . \ar[ur]^{\phi\otimes \id_A} \ar[urr]_{\psi} } \]  
In other words, $\psi$ is a $*$-equivariant traical approximate left inverse for $1_{C(G)}\otimes \id_A$.  
\end{proof}

As usual, $A\rtimes_{\alpha} G$ denotes the crossed product $C\sp*$-algebra of $(A, \alpha:G \curvearrowright A)$.  Moreover, we let $u:G \to U(A\rtimes_{\alpha}G)$ be the implementing (universal) unitary representation for $\alpha$. Thus  we write an element of $A\rtimes_{\alpha}G$ as $\sum_{g\in G}a_gu_g$  so that  $u_g a_h u^*_g=\alpha_g(a_h)$ for $g, h \in G$. The embedding of $A$ into $A\rtimes_{\alpha}G$ is given by the map $a \mapsto au_e$, where $e$ is the identity element of $G$. 

\begin{prop}
We denote by $\phi\rtimes G$ a natural extension of an equivariant map $\phi:(A,\alpha) \to (B,\beta)$  from $A\rtimes_{\alpha} G$ to $B\rtimes_{\beta} G$, where $\alpha:G \curvearrowright A$ and $\beta:G \curvearrowright B$.  Then $\phi \rtimes G$ is an order zero map if $\phi$ is an order zero map.  
\end{prop}
\begin{proof}
Note that $(\phi \rtimes G) (\sum_{g \in G}a_g u_g)= \sum_{g} \phi(a_g) u_g$. Let  $a=\sum_{g}a_gu_g$ and $ b=\sum_h b_hu_h$  be orthogonal in $A\rtimes_{\alpha} G$.  Then 
\[  ab=\sum_{g, h \in G} a_g \alpha_g(b_h) u_{gh}=0.\] By taking $gh=t$,  we have
\[ \sum_t \left(\sum_g a_g \alpha_g(b_{g^{-1}t}) \right)u_t=0.\]
Write $\phi(\bullet)= y \pi (\bullet)$, where $\phi(1_A)=y \in \pi(A)^{`}$. 
Since $(\phi \rtimes G)( \sum_t \left(\sum_g a_g \alpha_g(b_{g^{-1}t}) \right)u_t)=0$,  it follows that 
\[ \sum_t \phi(\sum_g a_g \alpha_g(b_{g^{-1}t}))u_t=0.\]
Therefore,  using $y(\sum_g c_g u_g)=\sum_g y c_g u_g$ twice,

\begin{align*}
0 &= \sum_t y \left(\sum_g  \phi(a_g \alpha_g(b_{g^{-1}t}))\right) u_t =\sum_t \left(\sum_g y \phi(a_g \alpha_g(b_{g^{-1}t}))\right) u_t \\
&= \sum_t \left(\sum_g y^2 \pi(a_g) \pi(\alpha_g(b_{g^{-1}t}))\right) u_t=\sum_t \left(\sum_g y \pi(a_g) y\pi(\alpha_g(b_{g^{-1}t}))\right) u_t\\
&= \sum_t \left( \sum_g \phi(a_g) \phi(\alpha_g(b_{g^{-1}t}))\right) u_t =\sum_t \left(\sum_g \phi(a_g)  \alpha_g (\phi(b_{g^{-1}t}))\right) u_t\\ 
&= \sum_h \left(\sum_g \phi(a_g)  \alpha_g (\phi(b_{h}))\right) u_{gh} =( \sum_g \phi(a_g)u_g)(\sum_h\phi(b_h)u_h). 
\end{align*}
Thus we have shown that $(\phi \rtimes G)(a) (\phi \rtimes G)(b)=0$. 

Since $M_n(A \rtimes_{\alpha} G) \cong M_n(A) \rtimes_{\alpha}G$ for a finite group $G$, to show that $\phi\rtimes G$ is completely positive it is enough to show that $(\phi \rtimes G)(c)$ is positive whenever $c$ is positive in $A\rtimes_{\alpha}G$. So consider $a^*a \ge 0$ where $a=\sum_g a_gu_g$ in $A \rtimes_{\alpha}G$.  
\[
\begin{split}
a^*a &= \left( \sum_g a_g u_g\right)^* \left(\sum_g a_g u_g \right) \\
&=\sum_{g,h} u_{g^{-1}}a^*_ga_h u_h = \sum_{g,h} \alpha_{g^{-1}}(a_g^* a_h)u_{g^{-1}h}.
\end{split}
\]  
Therefore, 
\[
\begin{split}
(\phi \rtimes G)(a^*a) &=  \sum_{g,h} \phi(\alpha_{g^{-1}}(a_g^* a_h))u_{g^{-1}h} =\sum_{g,h}\alpha_{g^{-1}}(\phi(a^*_g a_h))u_{g^{-1}h}\\
&=\sum_{g,h}\alpha_{g^{-1}}(y^{1/2}\pi(a^*_g)y^{1/2}\pi(a_h))u_{g^{-1}h}=\sum_{g,h} u_{g^{-1}}(y^{1/2}\pi(a_g))^*(y^{1/2}\pi(a_h))u_h\\
&=\left(\sum_g y^{1/2}\pi(a_g)u_g \right)^*\left( \sum_h y^{1/2}\pi(a_h)u_h\right) \ge 0.
\end{split}
\]
Thus we have shown that $\phi \rtimes G$ is completely positive too.   
\end{proof}

\begin{lem}\cite[Lemma 5.1]{HO}\label{L:positiveinA}
Let $A$ be a unital, simple, infinite dimensional  $C\sp*$-algebra and $\alpha:G\curvearrowright A$  an action of a finite group $G$ on $A$ such that $\alpha_g$ is outer for all $g\in G \setminus \{e\}$. Then for every nonzero positive element  $z \in A\rtimes_{\alpha} G$  there exists a nonzero positive element $x\in A$ such that $x \lesssim z$.   
\end{lem}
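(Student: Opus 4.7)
The plan is to exploit the canonical faithful conditional expectation $E\colon A\rtimes_\alpha G \to A$ given on generators by $E(au_g) = \delta_{g,e}\,a$, together with a Kishimoto-type averaging argument based on outerness. Since $G$ is finite, $E$ is faithful, so $b := E(z)$ is a nonzero positive element of $A$, and I fix $\epsilon > 0$ small enough that $(b-\epsilon)_+ \ne 0$. The sought-after element in $A$ will be produced as a cut-down of the form $(cbc - \delta)_+$, where $c \in A$ is a positive contraction sitting inside the hereditary subalgebra $\overline{(b-\epsilon)_+ A (b-\epsilon)_+}$.

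First I approximate $z$ in norm by a finite sum $z_0 = \sum_{g\in G} a_g u_g$ with $a_e = b$, to any prescribed tolerance $\eta_0$. Since $A$ is simple and each $\alpha_g$ with $g \ne e$ is outer, Kishimoto's lemma provides, for any $\eta>0$, a positive contraction $c \in \overline{(b-\epsilon)_+ A (b-\epsilon)_+}$ with $\|c\|=1$ such that $\|c\,a_g\,\alpha_g(c)\| < \eta$ for every $g\in G\setminus\{e\}$. Using the identity $c(a_g u_g)c = c\,a_g\,\alpha_g(c)\,u_g$, every off-diagonal summand of $cz_0c$ then has norm less than $\eta$.

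Summing, $\|czc - cbc\| \le \|z-z_0\| + |G|\eta < \eta_0 + |G|\eta$, which can be made arbitrarily small by choosing $\eta_0$ and $\eta$ appropriately. Because $c$ is a norm-one positive element in the hereditary subalgebra generated by $(b-\epsilon)_+$, the element $c(b-\epsilon)_+c$ is nonzero by a standard approximate-unit argument, so the positive element $cbc \in A$ that dominates it is likewise nonzero. Choosing the parameters so that $\|czc - cbc\| < \delta$ for some $\delta > 0$ with $(cbc - \delta)_+ \ne 0$, the R{\o}rdam-type perturbation estimate (part (4) of Lemma \ref{L:Cuntz}) yields $(cbc - \delta)_+ \lesssim czc \lesssim z$ inside $A\rtimes_\alpha G$. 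Setting $x := (cbc - \delta)_+$ delivers the required nonzero positive element of $A$ with $x \lesssim z$.

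The main technical obstacle is Kishimoto's lemma itself: one must locate a positive contraction $c$ inside the prescribed hereditary subalgebra that \emph{simultaneously} forces $\|c\,a_g\,\alpha_g(c)\|$ below $\eta$ for each of the finitely many $g \in G \setminus \{e\}$. This is classical content for outer actions on simple $C\sp*$-algebras, and it applies equally well after replacing $A$ by the corner $\overline{(b-\epsilon)_+ A (b-\epsilon)_+}$, which is itself a simple hereditary subalgebra on which the restrictions of $\alpha_g$ remain outer in a suitable sense. Once this averaging input is in hand, the remaining $\epsilon$-$\delta$ bookkeeping via Lemma \ref{L:Cuntz} is routine.
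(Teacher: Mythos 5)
The paper gives no proof of this lemma; it is quoted directly from Hirshberg--Orovitz, and your argument (the faithful canonical conditional expectation $E\colon A\rtimes_\alpha G\to A$ followed by Kishimoto-type averaging with a positive contraction in a hereditary subalgebra of $\overline{E(z)AE(z)}$) is essentially their proof, so the approach is the right one. The single step you should tighten is quantitative rather than qualitative: since $c$ is chosen only after $\eta$ is fixed, knowing merely that $cbc\neq 0$ does not allow you to arrange $\|czc-cbc\|<\delta<\|cbc\|$ in advance; what rescues the argument is that your hereditary subalgebra gives a \emph{uniform} lower bound, namely for any norm-one positive $c\in\overline{(b-\epsilon)_+A(b-\epsilon)_+}$ one has $c(b-\epsilon)_-c=0$, hence $cbc=c(b-\epsilon)_+c+\epsilon c^2\geq \epsilon c^2$ and $\|cbc\|\geq\epsilon$, so fixing $\eta_0+|G|\eta<\epsilon/2$ beforehand and taking $\delta=\epsilon/2$ closes the loop. (Two minor remarks: because $G$ is finite every element of $A\rtimes_\alpha G$ is already exactly a finite sum $\sum_g a_gu_g$ with $a_e=E(z)$, so the $\eta_0$-approximation is superfluous; and the simultaneous Kishimoto statement you need is the one producing $c$ in a prescribed hereditary subalgebra of $A$ with $\|ca_g\alpha_g(c)\|<\eta$ for the finitely many $g\neq e$ at once --- $\alpha_g$ need not restrict to that subalgebra, so one should invoke the hereditary form of Kishimoto's lemma rather than ``outerness of the restriction.'')
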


\begin{thm}\label{T:Actioncase}
Let $G$ be a finite group and $A$ a unital, simple,  infinite dimensional $C\sp*$-algebra. Suppose that $\alpha:G \curvearrowright A$ has the generalized tracial Rokhlin property. Then the $*$-homomorphism $ (1_{C(G)} \otimes id_A) \rtimes  G$ from $A\rtimes_{\alpha} G$ to $(C(G)\otimes A)\rtimes_{\sigma\otimes\alpha} G$ is tracially sequentially-split by order zero map. 
\end{thm}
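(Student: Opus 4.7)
The plan is to pull the equivariant order-zero approximate left inverse produced by the preceding Corollary through the crossed product construction. Fix a nonzero positive element $z \in (A\rtimes_{\alpha} G)_{\infty}$ for which we need to exhibit an order-zero tracial approximate left inverse of $\Phi := (1_{C(G)}\otimes\id_A)\rtimes G$. Since the generalized tracial Rokhlin property on a simple infinite-dimensional $A$ forces $\alpha_g$ to be outer for every nontrivial $g \in G$, Lemma \ref{L:positiveinA} is available in each fiber: representing $z$ by a sequence $(z_n)_n$ of positive contractions with $\|z_n\|$ uniformly bounded below, a termwise application of that lemma, with standard norm control on the Cuntz witness, yields nonzero positive contractions $x_n \in A$ such that $x_n \lesssim z_n$ in $A\rtimes_{\alpha} G$ uniformly in $n$. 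Setting $x := [(x_n)_n] \in A_{\infty}^{+}$ then provides a nonzero positive element with $x \lesssim z$ in $(A\rtimes_{\alpha} G)_{\infty}$.

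Next I would apply the preceding Corollary to $x$. This gives an equivariant order zero map $\psi : (C(G)\otimes A,\sigma\otimes\alpha) \to (A_{\infty},\alpha_{\infty})$ and a positive contraction $g \in A_{\infty}\cap A'$ such that $\psi(1_{C(G)}\otimes a) = ag$ for every $a \in A$ and $1 - g \lesssim x$ in $A_{\infty}$. Setting $a = 1_A$ in the equivariance identity $\psi \circ (\sigma\otimes\alpha)_h = \alpha_{\infty,h}\circ\psi$ gives $\alpha_{\infty,h}(g) = g$ for every $h \in G$. As noted in the paragraph preceding Lemma \ref{L:positiveinA}, the equivariant order-zero map $\psi$ induces an order zero map
\[
\psi \rtimes G : (C(G)\otimes A)\rtimes_{\sigma\otimes\alpha} G \longrightarrow A_{\infty}\rtimes_{\alpha_{\infty}} G.
\]
Since $G$ is finite, the canonical $*$-homomorphism $\iota : A_{\infty}\rtimes_{\alpha_{\infty}} G \to (A\rtimes_{\alpha} G)_{\infty}$ sending $[(a_n)_n]u_h$ to $[(a_n u_h)_n]$ is well defined, and $\Psi := \iota \circ (\psi \rtimes G)$ is an order zero map into $(A\rtimes_{\alpha} G)_{\infty}$.

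Finally I would verify the conditions of Definition \ref{D:sequentiallysplitbyorderzero}. For $au_h \in A\rtimes_{\alpha} G$ the computation
\[
\Psi\bigl(\Phi(au_h)\bigr) = \iota\bigl(\psi(1_{C(G)}\otimes a)\,u_h\bigr) = (ag)u_h,
\]
together with the consequence $gu_h = u_h g$ of $\alpha_{\infty,h}(g) = g$ inside $(A\rtimes_{\alpha} G)_{\infty}$, gives $\Psi(\Phi(au_h)) = (au_h)g$. Since $g$ commutes with $A$ and with each $u_h$, it lies in $(A\rtimes_{\alpha} G)_{\infty} \cap (A\rtimes_{\alpha} G)'$; linearity then yields condition (1), while condition (2) follows from $1 - g \lesssim x \lesssim z$ in $(A\rtimes_{\alpha} G)_{\infty}$. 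The main obstacle is the book-keeping in the first step, namely lifting termwise Cuntz subequivalences $x_n \lesssim z_n$ to a single Cuntz subequivalence in $(A\rtimes_{\alpha} G)_{\infty}$ with a uniformly bounded witness; everything else is conceptually clean, because the equivariant tracial approximate left inverse for $1_{C(G)}\otimes\id_A$ descends to crossed products and its scaling element $g$ automatically becomes central in the sequence algebra of $A\rtimes_{\alpha} G$.
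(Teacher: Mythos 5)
Your proof is correct and follows essentially the same route as the paper: reduce $z$ to a positive element $x\in A_{\infty}$ via Lemma \ref{L:positiveinA} (using outerness from the generalized tracial Rokhlin property), take the equivariant order zero map from Theorem \ref{T:tracialRokhlinaction2} and its Corollary, and push everything through the crossed product functor into $(A\rtimes_{\alpha}G)_{\infty}$. You merely make explicit two points the paper glosses over, namely the termwise lifting of the Cuntz subequivalence to the sequence algebra and the $G$-invariance of $g$ ensuring it is central in $(A\rtimes_{\alpha}G)_{\infty}$.
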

\begin{proof}
Take a nonzero positive element $z$ in $(A\rtimes_{\alpha}G)_{\omega} $.  Since $\alpha:G\curvearrowright A$ is outer  by \cite[Proposition 5.3]{HO},  Lemma \ref{L:positiveinA} implies that we can have a nonzero positive element $x$ in $A_{\omega}$ such that $x \lesssim z$. By Theorem \ref{T:tracialRokhlinaction2} there is an equivarinat order zero map $\phi:C(G) \to A_{\omega}$ such that  $1_{A_{\omega}}-\phi(1_{C(G)}) \lesssim x \lesssim z$.  Note that  $ ( \psi \rtimes G) \circ (1_{C(G)}\otimes \id_A)\rtimes G)(1_A u_e) = [\phi(1_{C(G)})]u_e$ where $\psi$ is as in Corollary \ref{C:leftinverse}. 
Moreover, $1_{A_{\omega}}u_e - \phi(1_{C(G)})u_e \lesssim z$. It follows that  $(\psi \rtimes G)$ is a tracial approximate left inverse for $(1_{C(G)} \otimes \id_A)\rtimes G$. Here we implicitly use the natural embedding from $A_{\omega} \rtimes_{\alpha_{\omega}} G$ to $(A\rtimes_{\alpha} G)_{\omega}$. 
\[ 
\xymatrix{ (A\rtimes_{\alpha}G) \ar[rd]_{(1_{C(G)} \otimes \id_A)\rtimes G} \ar@{-->}[rr]^{\iota\rtimes G} && (A_{\omega}\rtimes_{ \alpha_{\omega}} G) \to (A\rtimes_{\alpha} G)_{\omega} &\\
                          & (C(G)\otimes A)\rtimes_{\sigma\otimes \alpha}G \, .\ar[ur]_{\psi \rtimes G} } \]  
\end{proof}

\begin{cor}\label{C:permanencebyRokhlinaction}
Let $G$ be a finite group and $A$ be a unital, separable, infinite dimensional $C\sp*$-algebra. Suppose that $\alpha:G \curvearrowright A$ has the genralized tracial Rokhlin property. Then if $A$ has the following properties, then so does $A\rtimes_{\alpha} G$.  
\begin{enumerate}
\item simple, 
\item simple and $\mc{Z}$-absorbing provided that $A$ is nuclear,
\item simple and strict comparison property provided that $A$ is exact and is not type I,
\item simple and stably finite.
\end{enumerate}
\end{cor}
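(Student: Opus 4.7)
The plan is to feed Theorem \ref{T:Actioncase} into the transfer theorems from Section 3. That theorem provides a unital $*$-homomorphism
\[
\phi:=(1_{C(G)}\otimes \id_A)\rtimes G\colon A\rtimes_{\alpha}G \longrightarrow (C(G)\otimes A)\rtimes_{\sigma\otimes\alpha}G
\]
that is tracially sequentially-split by an order zero map. My first step would be to identify the codomain as a matrix algebra over $A$: the ``untwisting'' automorphism of $C(G)\otimes A\cong C(G,A)$ defined by $\Phi(f)(g)=\alpha_{g}^{-1}(f(g))$ intertwines $\sigma\otimes\alpha$ with $\sigma\otimes\id_A$, whence
\[
(C(G)\otimes A)\rtimes_{\sigma\otimes\alpha}G\;\cong\;(C(G)\rtimes_{\sigma}G)\otimes A\;\cong\; M_{|G|}(\mathbb{C})\otimes A\;=\;M_{|G|}(A).
\]
Since $A$ is unital, separable, infinite dimensional and $G$ is finite, $A\rtimes_{\alpha}G$ inherits each of these properties, together with nuclearity whenever $A$ is nuclear. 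Moreover, the matrix amplification $M_{|G|}(A)$ inherits simplicity, $\mc{Z}$-absorption, strict comparison of positive elements, and stable finiteness from $A$ by standard arguments.

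For each of the four implications I would invoke the corresponding transfer theorem against the pair $(A\rtimes_{\alpha}G,\,M_{|G|}(A))$ connected by $\phi$. For (1), Theorem \ref{T:simple} applied to $\phi$ yields simplicity of $A\rtimes_{\alpha}G$ from simplicity of $M_{|G|}(A)$. For (2), with (1) in hand, $A\rtimes_{\alpha}G$ is simple, unital, infinite dimensional and nuclear, while $M_{|G|}(A)$ is $\mc{Z}$-absorbing and hence tracially $\mc{Z}$-absorbing, so the final clause of Theorem \ref{T:Z-absorbing} gives $\mc{Z}$-stability of $A\rtimes_{\alpha}G$. For (3), simplicity from (1) together with infinite dimensionality places $A\rtimes_{\alpha}G$ outside type I, and the codomain $M_{|G|}(A)$ is simple, stably finite, infinite dimensional and has strict comparison, so Theorem \ref{T:strictcomparison} applies directly. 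Finally, for (4), the second part of Theorem \ref{T:simple} transfers stable finiteness from $M_{|G|}(A)$ back to $A\rtimes_{\alpha}G$.

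The hard part, if any, is purely bookkeeping: verifying the crossed-product isomorphism $(C(G)\otimes A)\rtimes_{\sigma\otimes\alpha}G\cong M_{|G|}(A)$ and checking that each of the four properties is closed under matrix amplification. Both are standard, so the corollary is essentially a mechanical specialization of Theorems \ref{T:simple}, \ref{T:Z-absorbing}, and \ref{T:strictcomparison} to the map $\phi$ produced by Theorem \ref{T:Actioncase}; no new tracial machinery is required.
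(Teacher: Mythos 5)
Your proposal is correct and follows essentially the same route as the paper: identify $(C(G)\otimes A)\rtimes_{\sigma\otimes\alpha}G$ with $M_{|G|}(A)$, observe that each property passes to the matrix amplification, and then apply Theorems \ref{T:simple}, \ref{T:Z-absorbing}, and \ref{T:strictcomparison} to the tracially sequentially-split map supplied by Theorem \ref{T:Actioncase}. The only difference is that you spell out the untwisting isomorphism and the hypothesis-checking for each transfer theorem, which the paper leaves implicit.
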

\begin{proof}
Since 
\[ \begin{split}
 (C(G)\otimes A) \rtimes_{\sigma\otimes \alpha} G &\simeq (C(G)\rtimes_{\sigma}G)\otimes A\\
&\simeq M_{|G|}(\mathbb{C})\otimes A, 
\end{split}\] 
$(C(G)\otimes A) \rtimes_{\sigma\otimes \alpha} G$ does share the same structural property with $A$. Moreover,  the $*$-homomorphism $(1_{C(G)} \otimes id_A) \rtimes  G: A\rtimes_{\alpha} G \to (C(G)\otimes A)\rtimes_{\sigma\otimes\alpha} G \simeq M_{|G|}(A)$ is tracially sequentially-split by order zero map  by Theorem \ref{T:Actioncase}. Therefore the conclusions follow from Theorem \ref{T:simple}, Theorem \ref{T:Z-absorbing}, Theorem \ref{T:strictcomparison}.
\end{proof}

\begin{cor}
Let $G$ be a finite group, $A$ be a unital, separable, finite, infinite dimensional, simple, nuclear  $C\sp*$-algebra and  $\alpha:G \curvearrowright A$ have the generalized tracial Rokhlin property.  Assume that $A$ absorbs the Jiang-Su algebra $\mathcal{Z}$. Then $\tsr(A\rtimes_{\alpha}G)=1$. 
\end{cor}
\begin{proof}
It follows from Theorem \ref{T:Actioncase} and Corollary \ref{C:tsr}.
\end{proof}%-----------------------------------------------------------------------------------------------------------------------------------------
Another important example of a $*$-homomorphism which is tracially sequentially-split by order zero map  is provided by an inclusion of unital $C\sp*$-algebras of index-finite type with the generalized tracial Rokhlin property.  
\begin{defn}[Watatani]
Let $P\subset A$ be an inclusion of unital $C\sp*$-algebras and $E:A \to P$ a conditional expectation. Then we say that $E$ has a quasi-basis if there exist elements $\{(u_k,v_k)\}$ for $k=1,\dots, n$ such that for any $x\in A$
\[ x=\sum_{j=1}^nu_jE(v_jx)=\sum_{j=1}^n E(xu_j)v_j.\]
In this case, we define the Watatani index  of $E$ as 
\[\Index E= \sum_{j=1}^n u_jv_j.\] In other words, we say that $E$ has a finite index if there exists a quasi-basis. 
\end{defn}

Let $P\subset A$ be an inclusion of unital $C\sp*$-algebras and $E:A \to P$ a conditional expectation.
Unless stated otherwise  we assume that $E$ is faithful. Let $\mc{E}_{E}$ be the Hilbert $P$-module completion of $A$ by the norm given by a $P$-valued Hermitian bilinear form $\langle x, y \rangle_P=E(x^*y)$ for $x,y \in A$. As usual $\mc{L}(\mc{E}_E)$ denotes the algebra of  adjointable bounded operators on $\mc{E}_E$. There are an injective $*$-homomorphism $\lambda:A \to \mc{L}(\mc{E}_E)$ defined by a left multiplication and the natural inclusion map $\eta_E$ from $A$ to $\mc{E}_E$. 
Then the \emph{Jones projection} $e_P$ is defined by 
\[ e_P(\eta_E(x))=\eta_E(E(x)).\] 
Note that by \cite[Lemma 2.11]{W:index}, 
\begin{equation}
e_P x e_P= E(x)e_P
\end{equation}
for any $x\in A$.  For more details and related properties of an inclusion of unital $C\sp*$-algebras of index-finite type  we refer the reader to \cite{OKT:Rokhlin}, \cite{OT1}, \cite{W:index}. 

\begin{defn}\cite[Definition 3.2]{LO}
Let $P\subset A$ be an inclusion of unital $C\sp*$-algebras such that a conditional expectation $E:A\to P$ has a finite index and $E_{\omega}$ be the induced map from $A_{\omega}$ to $P_{\omega}$. It is said that $E$ has the generalized tracial Rohklin property if for every nonzero positive element $z\in A_{\omega}$ there is a nonzero positivie contraction $e\in A_{\omega}\cap A'$ such that 
\begin{enumerate}
\item $(\Index E)e^{1/2}e_Pe^{1/2}=e$,
\item $1-(\Index E)E_{\omega}(e) \lesssim z$ in $A_{\omega}$, 
\item $A\ni x \to xe \in A_{\omega}$ is injective. 
\end{enumerate} 
We call $e$ satisfying (1) and (2) a Rokhlin contraction. 
\end{defn}

As we notice, the third condition is automatically satisfied when $A$ is simple.  A typical example of  an inclusion of unital $C\sp*$-algebras of index-finite type arises from a finite group action $\alpha$ of $G$ on a unital $C\sp*$-algebra $A$; let $A^{\alpha}$ be the fixed point algebra, then the conditional expectation 
\[E(a)=\frac{1}{|G|} \sum_{g\in G} \alpha_g(a) \]   is of index-finite type if the action $\alpha:G\curvearrowright A$ is free \cite{W:index}. Moreover,  the following observation was obtained by us. 

\begin{prop}\cite[Proposition 3.8]{LO}
Let $G$ be a finite group, $\alpha$ an action of $G$ on a finite, infinite dimensional,  simple,  separable,  unital $C^*$-algebra $A$, and E as above.  Then $\alpha$ has the generalized tracial Rokhlin property if and only if $E$ has the generalized tracial Rokhlin property.
\end{prop}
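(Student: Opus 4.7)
The plan is to exploit the standard identification of the basic construction $C^*\langle A, e_P\rangle$ with the crossed product $A \rtimes_\alpha G$, under which $e_P = \frac{1}{|G|}\sum_{g\in G} u_g$ and $\Index E = |G|$. One passes between a $G$-equivariant Rokhlin family $\{e_g\}_{g\in G}$ for $\alpha$ and a single Rokhlin contraction $e$ for $E$ by the recipe $e = e_1$ and $e_g = \alpha_{\infty, g}(e)$.

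For the forward implication, apply Theorem \ref{T:tracialRokhlinaction} to extract, for any nonzero positive $z \in A_\infty$, mutually orthogonal positive contractions $e_g \in A_\infty \cap A'$ with $\alpha_{\infty,g}(e_h) = e_{gh}$ and $1 - \sum_g e_g \lesssim z$. Set $e := e_1$. Condition $(2)$ for $E$ is immediate since $|G|\, E_\infty(e) = \sum_g \alpha_{\infty,g}(e) = \sum_g e_g$. For condition $(1)$, expand in $(A \rtimes_\alpha G)_\infty$ using $u_g x = \alpha_{\infty,g}(x) u_g$ for $x \in A_\infty$:
\[
|G|\, e^{1/2} e_P e^{1/2} = \sum_{g\in G} e^{1/2} u_g e^{1/2} = \sum_{g\in G} e^{1/2} \alpha_{\infty,g}(e^{1/2}) u_g = \sum_{g\in G} e^{1/2} e_g^{1/2} u_g.
\]
Orthogonality $e \cdot e_g = 0$ for $g \neq 1$ forces $e^{1/2} e_g^{1/2} = 0$ by spectral calculus, so only the $g = 1$ term survives and the right side equals $e \cdot u_1 = e$. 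Condition $(3)$ is automatic since $A$ is simple.

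For the converse, given a Rokhlin contraction $e$ for $E$, set $e_g := \alpha_{\infty, g}(e) \in A_\infty \cap A'$; equivariance is built in. The crux is to read off mutual orthogonality from condition $(1)$ for $E$ by Fourier decomposition: the coefficient maps $E_g : A \rtimes_\alpha G \to A$, $\sum_h a_h u_h \mapsto a_g$, are contractive and hence extend to bounded maps $(A \rtimes_\alpha G)_\infty \to A_\infty$. Equating the $u_g$-coefficients on both sides of $|G|\, e^{1/2} e_P e^{1/2} = e = e \cdot u_1$ gives $e^{1/2} \alpha_{\infty, g}(e^{1/2}) = 0$ for $g \neq 1$, whence $e \cdot e_g = 0$; applying $\alpha_{\infty, h}$ promotes this to $e_h \perp e_{hg}$ for all $h, g$ with $g \neq 1$, so the family is mutually orthogonal. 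The inequality $1 - \sum_g e_g = 1 - |G|\, E_\infty(e) \lesssim z$ is condition $(2)$ for $E$. Thus $\alpha$ satisfies the asymptotic Rokhlin condition in $A_\infty \cap A'$, from which the finite-set, $\epsilon$-form of the definition is recovered standardly by taking representatives $e_g = [(e_{g,n})_n]$, applying Lemma \ref{L:perturbation} to turn approximate orthogonality at finite level into exact orthogonality at arbitrarily small cost, and using Lemma \ref{L:Cuntz}(iv) to pass the Cuntz subequivalence from $A_\infty$ down to $A$.

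The main obstacle is the Fourier decomposition step inside $(A \rtimes_\alpha G)_\infty$: one must verify that the coefficient maps $E_g$ extend to the sequence algebra via their contractivity on $A \rtimes_\alpha G$, and that the commutation relation $u_g e^{1/2} = \alpha_{\infty,g}(e^{1/2}) u_g$ remains valid when $e$ is viewed as an element of $(A \rtimes_\alpha G)_\infty$ through the embedding of $A_\infty$. Once these identifications are set up, the orthogonality falls out by reading off a single Fourier coefficient.
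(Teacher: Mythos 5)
This proposition is quoted from \cite{LO} and the paper gives no proof of it, so there is nothing in the text to compare against; judged on its own, your reconstruction follows the natural route and most of it is sound. The identification of the basic construction of $A^{\alpha}\subset A$ with $A\rtimes_{\alpha}G$, $e_P=\frac{1}{|G|}\sum_g u_g$ and $\Index E=|G|$, is legitimate here (index-finiteness of $E$, which is built into the definition of the generalized tracial Rokhlin property for $E$, forces the action to be saturated in Watatani's sense \cite{Watatani:index}; in the forward direction outerness follows from \cite{HO}). Granting that, the forward direction is correct: Theorem \ref{T:tracialRokhlinaction} supplies the exactly orthogonal, exactly equivariant family, orthogonality kills every Fourier mode of $|G|\,e^{1/2}e_Pe^{1/2}$ except $g=1$, and conditions (2) and (3) are immediate (do note $e\neq 0$ so that (3) really is automatic from simplicity, and rescale by $\|e\|^{-1}$ where normalization is needed). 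The Fourier-coefficient extraction in the converse is also fine, since the coefficient maps are contractive and hence pass to the sequence algebra.

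The genuine gap is the final descent in the converse, from the statement in $A_{\infty}\cap A'$ back to the finite-set, $\epsilon$-form of the Hirshberg--Orovitz definition. Lemma \ref{L:Cuntz}(4) does not deliver what you ask of it: from $1-\sum_g e_g\lesssim x$ in $A_{\infty}$ you get, for each fixed $\epsilon>0$, some $\delta>0$ and $s\in A_{\infty}$ with $(1-\sum_g e_g-\epsilon)_{+}=s^*(x-\delta)_{+}s$, and pushing this down to a finite stage $n$ only yields $(1-\sum_g e_{g,n}-2\epsilon)_{+}\lesssim x$, where the admissible $n$ depends on $\epsilon$. Since the Hirshberg--Orovitz definition demands the exact relation $1-\sum_g e_{g,n}\lesssim x$ at a single stage, this does not close the argument. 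The standard repair is a functional-calculus rescaling: after Lemma \ref{L:perturbation} has produced exactly orthogonal positive contractions $f_g$ at stage $n$, put $s=\sum_g f_g$ and $\tilde f_g=f_g\,k(s)$ with $k(t)=\min\bigl\{t^{-1},(1-\epsilon)^{-1}\bigr\}$; the $\tilde f_g$ remain orthogonal positive contractions, still approximately central and approximately equivariant, and now $1-\sum_g\tilde f_g=\tfrac{1}{1-\epsilon}\bigl((1-s)-\epsilon\bigr)_{+}\sim\bigl((1-s)-\epsilon\bigr)_{+}\lesssim x$ exactly. Without some such step the converse direction is incomplete.
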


We would like to show that if an inclusion $P\subset A$ of index-finite type has the generalized tracial Rokhlin property,  the embedding $P \hookrightarrow A$ is tracially sequentially-split by order zero map. A serious issue is to transfer $\lesssim$ in $A$ to $\lesssim$ in $P$. Though the following technical lemmas appeared in \cite{LO}, we rephrase them here for the convenience of the reader.    

\begin{lem}\label{L:technical}
Let $P \subset A$ be an inclusion of unital $C\sp*$-algebras of index-finite type. Suppose that $p,q$ are nonzero positive elements in $P_{\omega}$ such that $q \lesssim e^2 p$ in $A_{\omega}$ and $pe=ep$ where $e$ is a nonzero positive contraction in $A_{\omega}\cap A'$ such that $(\Index E)e^{1/2}e_Pe^{1/2}=e$. Then $q \lesssim p$ in $P_{\omega}$. 
\end{lem}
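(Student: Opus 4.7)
The plan is to convert the Cuntz hypothesis into a concrete equation via Lemma~\ref{L:Cuntz}(4), exploit the commutativity $ep=pe$ to simplify the functional calculus involved, and finally use the Rokhlin relation $(\Index E)\,e^{1/2}e_Pe^{1/2}=e$ to pull the implementing element into $P_\infty$. For each $\epsilon>0$, choose $\delta>0$ and $x\in A_\infty$ with $(q-\epsilon)_+=x(e^2p-\delta)_+x^*$. Working in the abelian subalgebra $C^*(e,p,1)\subseteq A_\infty$, the pointwise inequality $(s^2t-\delta)_+\leq s^2(t-\delta)_+$ on $[0,1]^2$ (verified by a case split at $t\le\delta$) yields $(e^2p-\delta)_+\leq e(p-\delta)_+e$. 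Hence $(q-\epsilon)_+\leq xe(p-\delta)_+ex^*$, so $(q-\epsilon)_+\lesssim xe(p-\delta)_+ex^*$ in $A_\infty$, and by Lemma~\ref{L:Cuntz}(3) applied to $c=xe(p-\delta)_+^{1/2}$ this is in turn Cuntz equivalent to $(p-\delta)_+^{1/2}\,ex^*xe\,(p-\delta)_+^{1/2}$ in $A_\infty$.

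The heart of the argument is to rewrite $ex^*xe$ using the basic construction. In $\langle A_\infty,e_P\rangle$, the dual conditional expectation $E_1\colon\langle A_\infty,e_P\rangle\to A_\infty$ satisfies $E_1(e_P)=(\Index E)^{-1}$, while the Jones projection identity reads $e_Pae_P=E_\infty(a)e_P$ for $a\in A_\infty$. Substituting the Rokhlin relation $e=(\Index E)e^{1/2}e_Pe^{1/2}$ into both factors of $e\cdot y\cdot e$ and then applying $E_1$, one derives the identity
\[
eye \;=\; (\Index E)\,e^{1/2}\,E_\infty(e^{1/2}ye^{1/2})\,e^{1/2},\qquad y\in A_\infty.
\]
With $y=x^*x$ this gives $ex^*xe=(\Index E)\,e^{1/2}re^{1/2}$, where $r:=E_\infty(e^{1/2}x^*xe^{1/2})\in P_\infty$. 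Substituting back and commuting $e^{1/2}$ past $(p-\delta)_+^{1/2}$ (legitimate since $ep=pe$) yields
\[
(q-\epsilon)_+ \;\lesssim\; (\Index E)\,e^{1/2}\,s\,e^{1/2}\quad\text{in }A_\infty,
\]
with $s:=(p-\delta)_+^{1/2}r(p-\delta)_+^{1/2}\in P_\infty$ lying in $\overline{(p-\delta)_+P_\infty(p-\delta)_+}$, so in particular $s\lesssim(p-\delta)_+\lesssim p$ in $P_\infty$.

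The main obstacle is then to promote the $A_\infty$-level subequivalence $(q-\epsilon)_+\lesssim(\Index E)e^{1/2}se^{1/2}$, whose endpoints both lie in $P_\infty$, to a genuine $P_\infty$-level subequivalence $(q-\epsilon)_+\lesssim s$ (which, chained with $s\lesssim p$ in $P_\infty$, gives the conclusion). This descent is not formal, since Cuntz subequivalence in a larger algebra does not in general restrict to a smaller one. I would resolve it by applying $E_\infty$ to a witnessing sequence $v_n\in A_\infty$ with $v_n(\Index E)e^{1/2}se^{1/2}v_n^*\to(q-\epsilon)_+$, invoking the Kadison Cauchy--Schwarz inequality $E_\infty(a)^*E_\infty(a)\le E_\infty(a^*a)$ together with the $P_\infty$-bimodule property of $E_\infty$ and the sandwich form by $e^{1/2}$ to extract a $P_\infty$-valued witness for $(q-\epsilon)_+\lesssim s$ up to arbitrarily small error, which can be absorbed by passing to a slightly larger cutoff. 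Letting $\epsilon\to 0$ and invoking Lemma~\ref{L:Cuntz}(4) then delivers $q\lesssim p$ in $P_\infty$.
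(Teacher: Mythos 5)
Your reduction is fine as far as it goes: the pointwise inequality $(s^2t-\delta)_+\le s^2(t-\delta)_+$, the identity $eye=(\Index E)\,e^{1/2}E_\infty(e^{1/2}ye^{1/2})e^{1/2}$ (which does follow from $e_Pae_P=E_\infty(a)e_P$, the $A_\infty$-bimodule property of the dual expectation $E_1$, and $E_1(e_P)=(\Index E)^{-1}$), and the commutation of $e^{1/2}$ with $(p-\delta)_+^{1/2}$ all check out, and they correctly deliver $(q-\epsilon)_+\lesssim(\Index E)e^{1/2}se^{1/2}$ in $A_\infty$ with $s\in\overline{(p-\delta)_+P_\infty(p-\delta)_+}$. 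But the final descent from $A_\infty$ to $P_\infty$ is where the entire content of the lemma lives, and the tool you propose cannot perform it. Writing $w_n=(\Index E)^{1/2}v_ne^{1/2}s^{1/2}$ so that $w_nw_n^*\to(q-\epsilon)_+$, Kadison--Schwarz plus the bimodule property give $u_nsu_n^*\le E_\infty(w_nw_n^*)$ with $u_n=(\Index E)^{1/2}E_\infty(v_ne^{1/2})\in P_\infty$. This bounds the candidate witness expression from \emph{above} by something converging to $(q-\epsilon)_+$; it gives no lower bound, and nothing prevents $E_\infty(v_ne^{1/2})=0$, in which case $u_nsu_n^*=0$ and the inequality is vacuous. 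So no $P_\infty$-witness is actually produced, and the claimed ``small error absorbed by a larger cutoff'' has no estimate behind it.

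Note that the paper does not reprove this lemma; it imports it from \cite[Lemma 5.2]{LO}. The mechanism there is an exact identity rather than an inequality: the Rokhlin relation $(\Index E)e^{1/2}e_Pe^{1/2}=e$ forces, for all $c,d\in A_\infty$, the multiplicativity formula $E_\infty(ced)=(\Index E)E_\infty(ce^{1/2})E_\infty(e^{1/2}d)$. (Compute $E_\infty(ce^{1/2})\,e_P\,E_\infty(e^{1/2}d)=e_Pc\,(e^{1/2}e_Pe^{1/2})\,de_P=(\Index E)^{-1}E_\infty(ced)e_P$ and use that $y\mapsto ye_P$ is injective on $P_\infty$ because $E_1(ye_P)=(\Index E)^{-1}y$.) Given $x\in A_\infty$ with $\|xepex^*-q\|<\epsilon$, applying this formula twice to $x\cdot e\cdot p\cdot e\cdot x^*$, and using that $pe=ep$ makes $E_\infty(ep)=pE_\infty(e)=E_\infty(e)^{1/2}pE_\infty(e)^{1/2}$ a positive element commuting with $p$, yields $E_\infty(xepex^*)=vpv^*$ with $v=(\Index E)E_\infty(xe^{1/2})E_\infty(e)^{1/2}\in P_\infty$; since $E_\infty$ is contractive and fixes $q$, we get $\|vpv^*-q\|<\epsilon$ and hence $q\lesssim p$ in $P_\infty$ directly. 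This bypasses the cutoffs and hereditary-subalgebra bookkeeping of your first part entirely. The lesson is that the Rokhlin relation must be used as an exact algebraic identity converting the $A_\infty$-witness into a $P_\infty$-witness, not merely as input to a positivity estimate.
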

\begin{proof}
This is an easy modification of \cite[Lemma 3.12]{LO1}. See \cite[Lemma 5.2]{LO} for details.
\end{proof} 

\begin{lem}\label{L:normcontrol}
Let $A$ be a unital, finite,  simple,  infinite dimensional $C\sp*$-algebra and $x $ a nonzero positive element of $A$ such that $\| x\|=1$. For any $\epsilon >0$ there exists $y$ in $\overline{xAx} \setminus \{0\}$ such that whenever $g\in A_{\omega}^{+}$ satisfies $0\le g \le 1$ and $g \lesssim y$ in $A_{\omega}$, then $\| (1-g)x(1-g)\| > 1- \epsilon$.
\end{lem}
\begin{proof}
It essentially follows from \cite[Lemma 2.6]{Phillips:Large} since any nonzero positive element in $A_{\omega}$ can be represented by a sequence of nonzero positive elements in $A$.  
\end{proof}

\begin{thm}\label{T:Inclusion}
Let $P\subset A$ be an inclusion of unital, infinite dimensional, separable, $C\sp*$-algebras of index-finite type where $A$ is simple and  finite. Suppose that a conditional expectation $E:A \to P$  has the generalized tracial Rokhlin property. Then the embedding $\iota: P \hookrightarrow A$ is  tracially sequentially-split by order zero map.  
\end{thm}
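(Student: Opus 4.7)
The plan is to construct, given any nonzero positive $z \in P_\infty$, a tracial approximate left inverse $\psi : A \to P_\infty$ of order zero together with a compatible $g \in P_\infty \cap P'$, using the Rokhlin contraction supplied by the hypothesis. Regarding $z$ also as a nonzero positive element of $A_\infty$, I would apply the generalized tracial Rokhlin property of $E$ to produce $e \in A_\infty \cap A'$ satisfying $(\Index E)\,e^{1/2}e_P e^{1/2} = e$ and $1_{A_\infty} - (\Index E)\,E_\infty(e) \lesssim z$ in $A_\infty$. Set $g := (\Index E)\,E_\infty(e) \in P_\infty$ and define the candidate $\psi : A \to P_\infty$ by $\psi(a) := (\Index E)\,E_\infty(e^{1/2} a e^{1/2})$, so that $g = \psi(1_A)$.

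The relation $\psi \circ \iota = (\cdot)\,g$ is then immediate: since $e^{1/2} \in A'$ one has $e^{1/2} p e^{1/2} = pe$ for $p \in P$, and the $P_\infty$-bimodule property of $E_\infty$ gives $\psi(p) = (\Index E)\,p\,E_\infty(e) = pg$. Symmetric application of the same computation places $g$ in $P_\infty \cap P'$; nonzero-ness of $g$ is inherited from nonzero-ness of $e$ via the strong faithfulness $E(a^*a) \geq c\,a^*a$ (some $c>0$) that accompanies a finite-Watatani-index conditional expectation. To promote the subequivalence $1 - g \lesssim z$ from $A_\infty$ down to $P_\infty$, I would invoke Lemma \ref{L:technical}; arranging its hypothesis $q \lesssim e^2 p$ (with $p, q \in P_\infty$ and $pe = ep$) requires applying the Rokhlin property with a carefully chosen auxiliary positive element of $P$ that commutes with $e$, exploiting simplicity of $A$ and the asymptotic centrality of $e$ in $A$.

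The main obstacle is verifying that $\psi$ is an order zero map. The inner factor $a \mapsto e^{1/2} a e^{1/2} = ae$ is trivially order zero as a map into $A_\infty$ since $e \in A'$ gives $(ae)(be) = abe^2 = 0$ whenever $ab = 0$, but post-composing with the conditional expectation $E_\infty$ does not in general preserve orthogonality, so the Rokhlin identity $(\Index E)\,e^{1/2} e_P e^{1/2} = e$ must be used essentially. I would work inside the sequence algebra of the basic construction $A_1 = \langle A, e_P\rangle$ with the dual expectation $\hat{E} : A_1 \to A$ satisfying $\hat{E}(e_P) = (\Index E)^{-1}$; the composed expectation $F := E \circ \hat{E} : A_1 \to P$ then obeys $F(x e_P y) = (\Index E)^{-1}\,E(xy)$ for $x, y \in A$. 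This permits the rewriting $\psi(a) = (\Index E)^2\,F_\infty(e^{1/2} a e_P e^{1/2})$, after which the Jones-projection identity $e_P a e_P = E(a) e_P$ together with the Rokhlin relation should collapse the product $\psi(a)\psi(b)$ into a form in which $ab$ appears as a factor, so that $ab = 0$ forces $\psi(a)\psi(b) = 0$. Equivalently, by Theorem \ref{T:semiprojective} it suffices to exhibit a $*$-homomorphism $C_0((0,1]) \otimes A \to P_\infty$ realizing $\psi$, with the continuous functional calculus of $e$ furnishing the height component compatible with the restriction to $P$.
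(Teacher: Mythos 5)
Your construction is the paper's construction: the tracial approximate left inverse is $\psi(a)=(\Index E)E_\infty(e^{1/2}ae^{1/2})=(\Index E)E_\infty(ae)$ with $g=\psi(1)=(\Index E)E_\infty(e)$, and the identity $\psi(p)=pg$ and the membership $g\in P_\infty\cap P'$ follow exactly as you say. Your basic-construction argument for the order zero property (which the paper outsources to \cite[Proposition 3.10]{LO}) can indeed be completed: setting $v=(\Index E)^{1/2}e^{1/2}e_P$ one has $\psi(a)e_P=v^*av$ and, by the Rokhlin identity, $vv^*=e$, which is central in $A_\infty$; hence $\psi(a)\psi(b)e_P=v^*a\,vv^*\,bv=v^*(ab)ev=0$ whenever $ab=0$, and faithfulness of the dual conditional expectation removes the $e_P$. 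So that part of your sketch is sound.

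The genuine gap is in the descent of $1-g\lesssim z$ from $A_\infty$ to $P_\infty$. You first apply the generalized tracial Rokhlin property to $z$ itself, which only yields $1-g\lesssim z$ in $A_\infty$, and then propose to arrange the hypothesis $q\lesssim f^2p$ of Lemma \ref{L:technical} afterwards by ``applying the Rokhlin property with a carefully chosen auxiliary positive element of $P$ that commutes with $e$.'' This does not work as described: the auxiliary object is not an element of $P$, and you cannot ask the input of the Rokhlin property to commute with its own output $e$. The order of the construction is essential. One must \emph{first} produce a Rokhlin contraction $f\in A_\infty\cap A'$ that commutes with $z$ and itself satisfies $(\Index E)f^{1/2}e_Pf^{1/2}=f$ (this is where simplicity and the construction in \cite[Theorem 5.8]{LO} enter), and \emph{then} apply the generalized tracial Rokhlin property to the element $fzf=f^2z\in A_\infty$ rather than to $z$. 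The resulting $e$ and $g=(\Index E)E_\infty(e)$ satisfy $1-g\lesssim f^2z$ in $A_\infty$, which is exactly the hypothesis of Lemma \ref{L:technical} with $q=1-g$, $p=z$ and the Rokhlin contraction $f$ (note the lemma's commutation condition is $pf=fp$, i.e.\ between $f$ and $z$, not between the test element and $e$). The lemma then gives $1-g\lesssim z$ in $P_\infty$, and the rest of your argument goes through unchanged.
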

\begin{proof}
Let $z$ be a nonzero positive element $P_{\omega} \subset\ A_{\omega}$.   Using the idea in \cite[Theorem 3.13]{LO1} or essentially the ``diagonal sequence argument"  we can construct a nonzero positive contraction $f$ such that $fz=zf$ and  $(\Index E)f^{1/2}e_Pf^{1/2}=f$; write  $z=[(z_k)_k]$ where each $z_k$ is a nonzero positive element in $P$. For $\displaystyle \epsilon = 1-\frac{(\Index E)^2}{4} $ there exists a positive element $y_k$ in $\overline{z_k^2 Az_k^2}\setminus \{0\}$ such that whenever $g \in A_{\omega}^{+}$, $0\le g \le 1$, and $g\lesssim y_k$ in $A_{\omega}$, then $\|(1-g)(z_k)^2(1-g)\| > 1-\epsilon$ by Lemma \ref{L:normcontrol}. Then for $y_k$ consider a Rokhlin contraction $e_k \in A_{\omega}\cap A'$ such that $(\Index E)(e_k) e_k^{1/2}e_Pe_k^{1/2}=e_k$  in $A_{\omega}$ and $1-(\Index E)E_{\omega}(e_k) \lesssim y_k$ in $A_{\omega}$.  By putting $g=1-(\Index E)E_{\omega}(e_k)$ we obtain that   
\begin{equation}\label{E:nonzero}
\| (\Index E)E_{\omega}(e_k)(z_k)^2(\Index E) E_{\omega}(e_k) \| > \dfrac{(\Index E)^2}{4}.
\end{equation}
 Write $e_k=[(e^k_n)_n]$. Since $E$ is contractive, we have  
\begin{enumerate}
\item $\displaystyle \lim_{n \to \omega}\|(\Index E)(e^k_n)^{1/2} e_P (e^k_n)^{1/2} -e^k_n\|=0$,
\item $\displaystyle \lim_{n \to \omega}\| e^k_n z_k-z_ke^k_n\|=0$,
\item $\displaystyle \lim_{n \to \omega}\|e^k_na-ae^k_n\|=0$ for $a\in A$,
\item $\displaystyle \lim_{ n \to \omega} \|e^k_nz_k\|\ge 1/2$.
\end{enumerate} 
Now let $F_k$'s be increasing finite sets such that $\overline{\cup_{k=1}^{\infty}F_k}=A$ since $A$ is separable. Then for each $k\in \mathbb{N}$, we can choose a subsequence $n_k$'s such that 
\begin{enumerate}
\item $\displaystyle \|(\Index E)(e^k_{n_k})^{1/2}e_P  (e^k_{n_k})^{1/2}-(e^k_{n_k})\| < 1/2^k$,
\item $\displaystyle \| e^k_{n_k} z_k-z_ke^k_{n_k}\|< 1/2^k$, 
\item $\displaystyle \|e^k_{n_k}a-ae^k_{n_k}\|< 1/2^k$ \quad \text{for all $a\in F_k$},
\item $\displaystyle \|e^k_{n_k}z_k\| \ge \epsilon_k$ \quad \text{for some $\epsilon_k >1/4$}. 
\end{enumerate} 
 Then $f=[(e^k_{n_k})_k]$ is a positive contraction in $A_{\omega}\cap A'$ such that $fz\neq 0$, $fz=zf$, and $(\Index E)f^{1/2}e_P f^{1/2}=f$. Note that $f^2z \neq 0$ by (\ref{E:nonzero}).
Now we can think of a Rokhlin contraction $e\in A_{\omega}\cap A'$ such that
 $1-(\Index E)E_{\omega}(e) \lesssim f^2z$ in $A_{\omega}$. 

We apply Lemma \ref{L:technical} to $1-(\Index E)E_{\omega}(e)$, $z$, $f$ to conclude $1-(\Index E)E_{\omega}(e)\lesssim z$ in $P_{\omega}$. Now we define a map $\beta:A \to P_{\omega}$ by $\beta(a):=(\Index E)E_{\omega}(ae)$ for $a\in A$.  Let $(\Index E)E_{\omega}(e)=h \in P_{\omega}\cap P'$. Note that $\beta(p)=(\Index E)E_{\omega}(pe)=p h$ and $1-\beta(1)=1-h \lesssim z$.  It follows from \cite[Proposition 3.10]{LO} that $\beta$ is an order zero map.  Hence $\beta$ is a tracial approximate left inverse for $\iota$ with respect to $z$.  
\end{proof}

\begin{cor}\label{C:permanencebyRokhlinproperty}
Let $P \subset A$ be an inclusion of unital, separable, exact, infinite dimensional $C\sp*$-algebras of index-finite type and assume that a conditional expectation $E:A \to P$ has the generalized tracial Rohklin property.  If $A$ satisfies one of the following properties, then $P$ does too. 
\begin{enumerate}
\item simple, 
\item simple and $\mc{Z}$-absorbing provided that $P$ is nuclear,
\item simple and strict comparison property provided that $P$ is not of type I,
\item simple and stably finite.
\end{enumerate}
\end{cor}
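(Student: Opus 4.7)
The plan is to combine Theorem \ref{T:Inclusion} with the source-to-target transfer results already established, applied with source $P$ and target $A$. By Theorem \ref{T:Inclusion}, the assumption that $E$ has the generalized tracial Rokhlin property yields that the embedding $\iota: P \hookrightarrow A$ is tracially sequentially-split by order zero map. With that fact in hand, each of the four conclusions becomes a one-line appeal: items (1) and (4) follow from Theorem \ref{T:simple}, item (2) follows from Theorem \ref{T:Z-absorbing}, and item (3) follows from Theorem \ref{T:strictcomparison}. Structurally this is completely parallel to Corollary \ref{C:permanencebyRokhlinaction}, only with the larger algebra $A$ playing the role of $M_{|G|}(A)$.

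Before invoking those theorems I would dispatch their side hypotheses on the source algebra $P$. Infinite dimensionality of $P$ follows from the finite-index hypothesis: a finite quasi-basis $\{(u_j, v_j)\}$ for $E$ makes $A$ finitely generated as a left $P$-module, so if $P$ were finite dimensional, $A$ would be too, contradicting the existence of the Rokhlin contractions in $A_\infty \cap A'$ demanded by the generalized tracial Rokhlin property. For item (2), the same finite-index argument shows that nuclearity of $A$ passes to $P$. For item (3), once $P$ has been proved simple by (1), the fact that $P$ is unital simple and infinite dimensional forces $P$ not to be of type I, by the standard classification of simple type I $C\sp*$-algebras.

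With the side conditions verified, the conclusions then follow immediately from the cited transfer theorems. The only genuinely nontrivial inheritance is nuclearity of $P$ from nuclearity of $A$, which is a standard consequence of the existence of a finite quasi-basis for $E$; everything else is bookkeeping. I do not expect any essentially new obstacle, since the substantial work has already been done in proving Theorem \ref{T:Inclusion} and the general permanence results of Section 3.
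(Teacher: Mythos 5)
Your proof is correct and takes essentially the same route as the paper: invoke Theorem \ref{T:Inclusion} to get that $\iota: P \hookrightarrow A$ is tracially sequentially-split by order zero map, then apply Theorems \ref{T:simple}, \ref{T:Z-absorbing}, and \ref{T:strictcomparison} with $P$ as source and $A$ as target. The additional checks you perform (infinite-dimensionality and nuclearity of $P$, and $P$ not being type I) are side hypotheses the paper's own two-line proof silently omits, so your version is if anything more complete.
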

\begin{proof}
The inclusion map $\iota: P \to A$ is tracially sequentially-split by order zero map by Theorem \ref{T:Inclusion}. Then the conclusions follows from  Theorem \ref{T:simple}, Theorem \ref{T:Z-absorbing}, Theorem \ref{T:strictcomparison}.
\end{proof}

\begin{cor}
Let $P \subset A$ be an inclusion of unital nuclear $C\sp*$-algebras of index-finite type with the generalized tracial Rokhlin property. Suppose that $A$ is a simple infinite dimensional finite $C\sp*$-algebra and $\mathcal{Z}$-absorbing.  Then $\tsr(P)=1$.
\end{cor} 
\begin{proof}
By Corollary \ref{C:permanencebyRokhlinproperty}, $P$ is also (stably) finite simple and $\mc{Z}$-absorbing. Then $\tsr(P)=1$ by \cite[Theorem 6.7]{Ro}.
\end{proof}

Though we can prove the permanence of stable rank one under an additional condition that  $\mc{Z}$-absorption, what we hope to prove is the following; 

\begin{question} 
Given $\phi:A\to B$ a tracially sequentially-split $*$-homomorphism by order zero map we assume that $B$ is a simple unital finite separable $C\sp*$-algebra of stable rank one. Then is it true that $A$ has stable rank one? 
\end{question} 

We believe that the above statement would be true if the following question is true;

\begin{question}
Let $G$ be a finite group, $A$ be a unital separable finite simple $C^*$-
algebra, and $\alpha$ be an action of $G$ on $A$ with the generalized tracial Rokhlin property
in the sense of Hirshberg and Orovitz. Assume that $\tsr(A) = 1$.
Then, is it true that $\tsr(A \rtimes_{\alpha} G) = 1$ ?
\end{question}
\section{Acknowledgements}
This research was carried out during the first author's visit to Ritsumeikan University. He would like to appreciate the department of mathematical science for its excellent support. Both authors  appreciate a fellow for the advice to use the ultrapower $C\sp*$-algebra rather than the sequence algebra. Finally, we  would like to express our sincere gratitude to a referee for his careful reading. 
%%%%%%%%%%%%%%%%%%%%%%%%%%%%%%%%%%%%%%%%%%%%%%%%%%%%%%%%% 

\end{document}